\definecolor{verylight}{gray}{0.97}
\definecolor{light}{gray}{0.9}
\definecolor{medium}{gray}{0.85}
\definecolor{dark}{gray}{0.6}
 \def\NZQ{\mathbb}               
 \def\ZZ{{\NZQ Z}}
 \def\FF{{\NZQ F}}
 \def\frk{\mathfrak}               
 \def\mm{{\frk m}}
 \def\G{{\mathcal G}}
\def\fb{{\mathbf f}}
 \def\opn#1#2{\def#1{\operatorname{#2}}} 
 \opn\chara{char} \opn\length{\ell} \opn\pd{pd} \opn\rk{rk}
 \opn\projdim{proj\,dim} \opn\injdim{inj\,dim} \opn\rank{rank}
 \opn\depth{depth} \opn\grade{grade} \opn\height{height}
 \opn\embdim{emb\,dim} \opn\codim{codim}
 \opn\Tr{Tr} \opn\bigrank{big\,rank}
 \opn\superheight{superheight}\opn\lcm{lcm}
 \opn\trdeg{tr\,deg}
 \opn\reg{reg} \opn\lreg{lreg} \opn\ini{in} \opn\lpd{lpd}
 \opn\size{size} \opn\sdepth{sdepth}
 \opn\link{link}\opn\fdepth{fdepth}\opn\lex{lex}
 \opn\tr{tr}
 \opn\type{type}
 \opn\gap{gap}
 \opn\diam{diam}
 \opn\Mod{Mod}
 \opn\div{div} \opn\Div{Div} \opn\cl{cl} \opn\Cl{Cl}
 \opn\Spec{Spec} \opn\Supp{Supp} \opn\supp{supp} \opn\Sing{Sing}
 \opn\Ass{Ass} \opn\Min{Min}\opn\Mon{Mon}
 \opn\Ann{Ann} \opn\Rad{Rad} \opn\Soc{Soc}
 \opn\Im{Im} \opn\Ker{Ker} \opn\Coker{Coker} \opn\Am{Am}
 \opn\Hom{Hom} \opn\Tor{Tor} \opn\Ext{Ext} \opn\End{End}
 \opn\Aut{Aut} \opn\id{id}
 \opn\nat{nat}
 \opn\pff{pf}
 \opn\Pf{Pf} \opn\GL{GL} \opn\SL{SL} \opn\mod{mod} \opn\ord{ord}
 \opn\Gin{Gin} \opn\Hilb{Hilb}\opn\sort{sort}
 \opn\PF{PF}\opn\Ap{Ap}
 \opn\dist{dist}
 \opn\aff{aff}
 \opn\relint{relint} \opn\st{st}
 \opn\lk{lk} \opn\cn{cn} \opn\core{core} \opn\vol{vol}  \opn\inp{inp} \opn\nilpot{nilpot}
 \opn\link{link} \opn\star{star}\opn\lex{lex}\opn\set{set}
 \opn\width{wd}
 \opn\Fr{F}
 \opn\QF{QF}
 \opn\G{G}
 \opn\type{type}\opn\res{res}
 \opn\conv{conv}
 \opn\sr{sr}
 \opn\gr{gr}
 \def\pot#1#2{#1[\kern-0.28ex[#2]\kern-0.28ex]}
 \opn\dirlim{\underrightarrow{\lim}}
 \opn\inivlim{\underleftarrow{\lim}}
 \def\Implies{\ifmmode\Longrightarrow \else
         \unskip${}\Longrightarrow{}$\ignorespaces\fi}
 \def\implies{\ifmmode\Rightarrow \else
         \unskip${}\Rightarrow{}$\ignorespaces\fi}
 \def\iff{\ifmmode\Longleftrightarrow \else
         \unskip${}\Longleftrightarrow{}$\ignorespaces\fi}
 \newtheorem{Theorem}{Theorem}[section]
 \newtheorem{Lemma}[Theorem]{Lemma}
 \newtheorem{Proposition}[Theorem]{Proposition}
 \newtheorem{Conjecture}[Theorem]{Conjecture}
 \let\epsilon\varepsilon
 \let\kappa=\varkappa
 \def\qed{\ifhmode\textqed\fi
       \ifmmode\ifinner\quad\qedsymbol\else\dispqed\fi\fi}
 \def\textqed{\unskip\nobreak\penalty50
        \hskip2em\hbox{}\nobreak\hfill\qedsymbol
        \parfillskip=0pt \finalhyphendemerits=0}
 \def\dispqed{\rlap{\qquad\qedsymbol}}
\begin{document}

\title{The canonical trace of\\ Cohen-Macaulay algebras of codimension 2}
\author{Antonino Ficarra}

\dedicatory{Dedicated with my deepest gratitude to the memory of my maestro and friend,\\ Professor J\"urgen Herzog (1941-2024)}

\address{Antonino Ficarra, Departamento de Matem\'{a}tica, Escola de Ci\^{e}ncias e Tecnologia, Centro de Investiga\c{c}\~{a}o, Matem\'{a}tica e Aplica\c{c}\~{o}es, Instituto de Investiga\c{c}\~{a}o e Forma\c{c}\~{a}o Avan\c{c}ada, Universidade de \'{E}vora, Rua Rom\~{a}o Ramalho, 59, P--7000--671 \'{E}vora, Portugal}
\email{antonino.ficarra@uevora.pt}\email{antficarra@unime.it}

\subjclass[2020]{Primary 13P10; Secondary 05E40}
\keywords{Nearly Gorenstein rings, canonical trace, Hilbert-Burch theorem}

\maketitle
\begin{abstract}
In the present paper, we investigate a conjecture of J\"urgen Herzog. Let $S$ be a local regular ring with residue field $K$ or a positively graded $K$-algebra, $I\subset S$ be a perfect ideal of grade two, and let $R=S/I$ with canonical module $\omega_R$. Herzog conjectured that the canonical trace $\tr(\omega_R)$ is obtained by specialization from the generic case of maximal minors. We prove this conjecture in several cases, and present a criterion that guarantees that the canonical trace specializes under some additional assumptions. As the final conclusion of all of our results, we classify the nearly Gorenstein monomial ideals of height two.
\end{abstract}

\section{Introduction}

Let $(R,\mm,K)$ be either a local ring or a positively graded $K$-algebra. Let $\mm$ be the (graded) maximal ideal of $R$. Assume that $R$ is Cohen-Macaulay and admits a canonical module $\omega_R$. The \textit{canonical trace} of $R$ is then defined as
$$
\tr(\omega_R)\ =\ \sum_{\varphi\in\Hom_R(\omega_R,R)}\varphi(\omega_R).
$$
It turns out that $R$ is a Gorenstein ring if and only if $\tr(\omega_R)=R$. Thus, the canonical trace measures, in a certain sense, how far is $R$ from being Gorenstein.

The ring $R$ is called \textit{nearly Gorenstein} if $\tr(\omega_R)$ contains $\mm$. Nearly Gorenstein rings were introduced Herzog, Hibi and Stamate in \cite{HHS1}, although they were implicitly considered in the work of Huneke and Vraciu \cite{HV}. Many people are interested to develop their theory, classify nearly Gorenstein rings, and compare the nearly Gorenstein property with other properties of $R$. See \cite{D,ET,FHSV22,GHHM,HKMM,HHS2,M24,M24b}.

Let ${\bf x}=x_1,\dots,x_n$ be a regular sequence on $R$, and denote by ``$\overline{\phantom{l.}}$" the reduction modulo ${\bf x}$. It is well-known that the canonical module specializes, that is $\overline{\omega_R}=\omega_{\overline{R}}$. One would guess that the canonical trace specializes as well: $\tr(\omega_{\overline{R}})=\tr(\omega_R)\overline{R}$? It is easy to see that $\tr(\omega_R)\overline{R}\subseteq\tr(\omega_{\overline{R}})$. However, in \cite[Example 3.3]{FHSV22} we discovered a counterexample to this expectation. Let $R=S/I$ where $S=K[x_1,x_2,x_3,y_1,y_2,y_3]$, $I=(x_1y_1,x_2y_2,x_3y_3,x_1x_2,x_1x_3,x_2y_3)$ and let ${\bf x}=x_1-y_1$. Then $\tr(\omega_{\overline{R}})\ne\tr(\omega_R)\overline{R}$. This example is remarkable, because $I$ is the edge ideal of a Cohen-Macaulay very well-covered graph, see \cite{CF1,CRT2011}.

Now, let $S$ be a Noetherian local ring or a positively graded $K$-algebra, and let $I\subset S$ be an ideal, which we assume to be homogeneous if $S$ is graded. We furthermore assume that $S$ is regular. We will explain this assumption in a moment.

We set $R=S/I$, and as before we assume that $R$ is Cohen-Macaulay with canonical module $\omega_R$. The simplest Cohen-Macaulay algebras of this form are those such that $I$ is a perfect ideal of grade two. Recall that $I$ is \textit{perfect} if $\textup{grade}\,R=\pd R$. These algebras are characterized by the celebrated \textit{Hilbert-Burch theorem} \cite{Burch68}.
\begin{Theorem}\label{Thm:HB}
	\textup{(\cite[Theorem 1.4.17]{BH}, \cite[Theorem 20.15]{Ei})} With the notation introduced, assume that $I$ has a free resolution
	$$
	\FF\ \ :\ \ 0\rightarrow S^{m-1}\xrightarrow{\varphi}S^m\rightarrow I\rightarrow0.
	$$
	
	Then, there exists a $S$-regular element $a$ such that $I=aI_{m-1}(\varphi)$. If $I$ is projective, then $I=(a)$, otherwise $\pd I=1$ and $I$ is perfect of grade two.
	
	Conversely, if $a$ is a non-zero divisor on $S$, $\grade I_{m-1}(\varphi)\ge2$ and the map $S^m\rightarrow I$ sends the $i$th basis element to $(-1)^ia$ times the minor obtained from $\varphi$ by leaving out the $i$th row, then $\FF$ is the minimal free resolution of $I=aI_{m-1}(\varphi)$.
\end{Theorem}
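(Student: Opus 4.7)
The plan is to analyze the forward direction by writing the surjection $S^m\to I$ as multiplication by the row vector $(a_1,\dots,a_m)$ of images of the basis elements. Since this composes to zero with $\varphi$, we have $(a_1,\dots,a_m)\varphi=0$. A canonical element of this left-kernel is the cofactor row $\delta=((-1)^1\Delta_1,(-1)^2\Delta_2,\dots,(-1)^m\Delta_m)$, where $\Delta_i$ is the $(m-1)\times(m-1)$ minor of $\varphi$ obtained by deleting the $i$-th row; the identity $\delta\varphi=0$ is just the expansion of a determinant with a repeated column. The crux of the proof is to upgrade ``$\delta$ lies in the left-kernel'' to ``$\delta$ \emph{generates} the left-kernel up to a single scalar $a\in S$'', and then to show $a$ is a non-zero divisor.

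I would establish the proportionality by a local-to-global argument. Since $\FF$ is acyclic, the Buchsbaum--Eisenbud acyclicity criterion forces $\grade I_{m-1}(\varphi)\ge 2$. At a prime $\pp\subset S$ not containing $I_{m-1}(\varphi)$, some $\Delta_i$ is a unit in $S_\pp$, and Cramer's rule over $S_\pp$ shows that the left-kernel of $\varphi$ in $S_\pp^{\,m}$ is free of rank one, generated by $\delta$. Hence there is a local scalar $a_\pp\in S_\pp$ with $a_i=(-1)^i a_\pp\Delta_i$ in $S_\pp$. The codimension condition $\grade I_{m-1}(\varphi)\ge 2$ is exactly what is needed for these local scalars to glue into a single global element $a\in S$, because the locus where all cofactors vanish has codimension at least two and therefore cannot obstruct sections of a depth-$\ge 2$ sheaf.

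The element $a$ is $S$-regular: if $ba=0$ for some nonzero $b\in S$, then $ba_i=0$ for all $i$, hence $bI=0$, contradicting $\grade I\ge 2$. The dichotomy in the conclusion is then immediate. If $m=1$, then $\varphi=0$, $I_0(\varphi)=S$, and $I=(a)$; if $m\ge 2$, then $\pd I=1$, and $\grade I\ge 2=\pd I+1$ gives perfectness. For the converse, with the given hypotheses, the proposed complex satisfies $\psi\circ\varphi=0$ again by the cofactor identity, and the rank-grade conditions $\rank\varphi=m-1$, $\rank\psi=1$, $\grade I_{m-1}(\varphi)\ge 2$, $\grade(a)\ge 1$ hold by assumption; the Buchsbaum--Eisenbud criterion delivers acyclicity, and minimality follows whenever the entries of $\varphi$ lie in $\mm$. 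The main obstacle I foresee is precisely the global patching in the forward direction: proportionality over each localization away from $V(I_{m-1}(\varphi))$ is routine linear algebra, but assembling the $a_\pp$ into a single $a\in S$ genuinely requires the full strength of the grade-two hypothesis, together with careful sign bookkeeping in the cofactor identities.
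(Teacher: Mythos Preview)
The paper does not give its own proof of this statement: Theorem~\ref{Thm:HB} is stated in the introduction with citations to \cite[Theorem~1.4.17]{BH} and \cite[Theorem~20.15]{Ei}, and is used as a classical black box throughout. There is therefore no proof in the paper to compare your proposal against.

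That said, your sketch follows the standard argument in those references (cofactor row plus the Buchsbaum--Eisenbud acyclicity criterion). One genuine slip: in your regularity argument for $a$ you write ``contradicting $\grade I\ge 2$'', but $\grade I\ge 2$ is not part of the hypotheses---the theorem explicitly allows $I$ to be projective, hence principal of grade one. In the paper's context $S$ is regular, hence a domain, so $bI=0$ with $b\ne 0$ and $I\ne 0$ is already impossible; in the general Noetherian setting of the cited references one argues instead via the total ring of fractions. Your description of the gluing of the local scalars $a_\pp$ as a ``depth-$\ge 2$ sheaf'' extension is informal but points at the correct mechanism; the textbook route fixes an index $i$ with $\Delta_i\ne 0$, defines $a$ in the total ring of fractions by $a_i=(-1)^i a\,\Delta_i$, checks this is independent of $i$ via Cramer's rule, and then uses $\grade I_{m-1}(\varphi)\ge 2$ to conclude $a\in S$.
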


Fixing bases of $S^{m-1}$ and $S^m$, we may represent $\varphi$ by a matrix $A$. Any such $A$ is called an \textit{Hilbert-Burch matrix} of $I$. Furthermore, we may compute $I_j(\varphi)$ as $I_j(A)$ for all $j$ and this ideal is independent from the particular Hilbert-Burch matrix $A$. Tensoring $\FF$ with $R=S/I$, we obtain the exact sequence
$$
0\rightarrow C\xrightarrow{\alpha}R^{m-1}\xrightarrow{\psi}R^m\rightarrow I/I^2\rightarrow0,
$$
where $C=\Ker\psi$ is the kernel of $\psi=\varphi\otimes\textup{id}_R$, and $\alpha$ is the inclusion map.

Since, as we assumed before, $S$ is regular, we can use the so-called \textit{Vasconcelos trick} \cite[Corollary 3.2]{HHS1} to obtain that $\tr(\omega_R)=I_1(\alpha)$.

Let now $S=K[x_1,\dots,x_n]$ be a graded polynomial ring over $K$. Let $\mu(I)$ be the minimal number of generators of $I$. The Hilbert-Burch Theorem \ref{Thm:HB} implies that any graded perfect ideal $I$ of grade two, with $\mu(I)=m$, arises as a specialization of the ideal of maximal minors $I_{m-1}(X)$ of a generic matrix $X=(x_{ij})$ of indeterminates. Indeed, if $A=(f_{ij})$ is an Hilbert-Burch matrix of $I$, and $R'=S[X]/I_{m-1}(X)$, then $R\cong R'/({\bf g})$ where ${\bf g}=x_{11}-f_{11}, \ldots, x_{m,m-1}-f_{m,m-1}$ is a regular sequence on $R'$. By \cite[Theorem 1.1]{FHSV22}, we have $\tr(\omega_{K[X]/I_{m-1}(X)})=I_{m-2}(X)/I_{m-1}(X)$. In this setting, we prove in Lemma \ref{Lem:c_A} that
$$
I_{m-2}(\psi)=I_{m-2}(A)R\ \subseteq\ \tr(\omega_{R})=I_1(\alpha).
$$
These facts, and several experimental evidences, lead to the following conjecture which J\"urgen Herzog posed to the author of this paper in private conversations.
\begin{Conjecture}\label{Conj:Her}
	\textup{(Herzog)} Let $S$ be either a regular local ring with residue class field $K$ or a regular positively graded $K$-algebra. Let $I$ be a perfect ideal of grade two and let $A$ be an Hilbert-Burch matrix of $I$. Set $R=S/I$. Then,
	$$
	\tr(\omega_R)=I_{\mu(I)-2}(A)R.
	$$
\end{Conjecture}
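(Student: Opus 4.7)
The strategy is to combine the Vasconcelos identity $\tr(\omega_R)=I_1(\alpha)$ with a specialization argument from the generic Hilbert-Burch setting. Since Lemma~\ref{Lem:c_A} already yields the inclusion $I_{m-2}(A)R\subseteq \tr(\omega_R)$ (with $m=\mu(I)$), the entire content of the conjecture lies in the reverse inclusion $I_1(\alpha)\subseteq I_{m-2}(A)R$.

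The first step is to rephrase the target inclusion at the level of syzygies. An element of $C=\Ker\psi$ is represented by a column ${\bf c}=(c_1,\dots,c_{m-1})^T\in S^{m-1}$ with $A{\bf c}\in I\cdot S^m$. Using the Hilbert-Burch description $I=a\cdot I_{m-1}(A)$, we can write $A{\bf c}=a\,{\bf h}$, with each entry of ${\bf h}$ an $S$-combination of the signed maximal minors $\delta_1,\dots,\delta_m$ of $A$. The conjecture is then equivalent to the assertion that each $c_j$ lies in $I_{m-2}(A)+I$. A Cramer-type expansion applied to the augmented matrices $[A\mid {\bf c}]$, combined with the Laplace relations between the $(m-1)$- and $(m-2)$-minors of $A$, should express each $c_j$ modulo $I$ as a linear combination of $(m-2)\times(m-2)$ minors of $A$.

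The second, more conceptual step is to specialize from the generic ring $R'=S[X]/I_{m-1}(X)$, for which $\tr(\omega_{R'})=I_{m-2}(X)/I_{m-1}(X)$ by \cite[Theorem~1.1]{FHSV22}, while $R=R'/({\bf g})$ with ${\bf g}=(x_{ij}-f_{ij})$ a regular sequence on $R'$. The conjecture then amounts to saying that the canonical trace commutes with this particular specialization, which is equivalent to the surjectivity of the natural map
$$
\Hom_{R'}(\omega_{R'},R')\otimes_{R'}R\ \longrightarrow\ \Hom_R(\omega_R,R).
$$
Applying $\Hom_{R'}(\omega_{R'},-)$ to the Koszul complex of ${\bf g}$, this surjectivity is governed by the vanishing of $\Ext^1_{R'}(\omega_{R'},R')$ along the specialization, which one hopes to verify using the Eagon-Northcott/Buchsbaum-Rim resolution of $\omega_{R'}$ together with a depth count for the ideal $I_{m-1}(X)$.

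The main obstacle is precisely that the canonical trace is known \emph{not} to commute with arbitrary specialization, as the counterexample quoted from \cite{FHSV22} demonstrates. Hence no purely formal flatness argument can succeed; one must genuinely use the Hilbert-Burch structure. I expect the hardest step to be verifying the required Ext-vanishing --- equivalently, controlling which new syzygies of $A$ modulo $I$ appear that are not already lifts of syzygies of $X$ modulo $I_{m-1}(X)$ --- in full generality. This is presumably what forces the author to work under additional hypotheses and to establish the conjecture only in ``several cases'', with the classification of nearly Gorenstein height-two monomial ideals as the final payoff.
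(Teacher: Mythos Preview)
This statement is a \emph{conjecture}, and the paper does not prove it in general; it establishes only two special cases (Theorem~\ref{Thm:genericallyci} for $I$ generically Gorenstein, and Theorem~\ref{Thm:mainzero} for monomial ideals in $K[x,y]$). You correctly anticipate this in your final paragraph, so your proposal should be read as an analysis of the difficulty rather than as a proof.

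Your ``step~2'' (specialization from the generic determinantal ring $R'$) is indeed the backbone of the paper's proof of Theorem~\ref{Thm:genericallyci}, but the concrete mechanism is different from the $\Ext$-vanishing criterion you sketch. Rather than studying $\Hom_{R'}(\omega_{R'},R')$ directly, the paper works with the four-term exact sequence $0\to C'\to R'^{m-1}\xrightarrow{\psi'}R'^m\to I'/I'^2\to 0$ and shows that the kernel $C'$ specializes to $C$. The key technical input is a result of Avramov--Herzog \cite{AH}: the Koszul homology $H_1(I';S')=\Im\psi'$ is a \emph{maximal Cohen--Macaulay} $R'$-module, so the short exact sequence $0\to C'\to R'^{m-1}\to\Im\psi'\to 0$ consists of MCM modules and therefore remains exact after reduction modulo the regular sequence ${\bf g}$. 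The generically Gorenstein hypothesis (equivalently, generically complete intersection by Lemma~\ref{Lem:genGonCI}) enters only to identify $\Im\psi$ with $H_1(I;S)$ on the specialized side, via \cite[Theorem~(3.2)]{AH}. This is the missing ingredient in your plan: the depth/Ext control you hope for is supplied not by the Eagon--Northcott resolution of $\omega_{R'}$, but by the MCM property of Koszul homology.

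Your ``step~1'' (Cramer-type expansion of an arbitrary element of $\Ker\psi$) is closer in spirit to the paper's second special case, Theorem~\ref{Thm:mainzero1}, where the Hilbert--Burch matrix is the explicit bidiagonal matrix~\eqref{eq:HBmatrixK[x,y]} and the argument is a hands-on induction on $m$, tracking the monomial support of each coordinate $w_i$. There is no general Cramer identity doing the work; the monomial structure is essential. So neither of your two steps, as stated, yields a proof of the full conjecture, and the paper does not claim one either.
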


In this paper we address this conjecture, and we settle it in important cases. Along the way we characterize perfect monomial ideals of grade two which are generically Gorenstein. More generally, we show that Conjecture \ref{Conj:Her} holds if we assume in addition that $I$ is generically Gorenstein. Furthermore, we prove that Conjecture \ref{Conj:Her} holds for any Cohen-Macaulay monomial ideal $I$ of $S=K[x,y]$, and again we classify the nearly Gorenstein algebras $S/I$ in this class. Finally, we classify the nearly Gorenstein monomial ideals of height two of $S=K[x_1,\dots,x_n]$.

Before breaking out the content of the paper, let us stress the relevance of the Hilbert-Burch Theorem \ref{Thm:HB} across the fifty-five years of J\"urgen Herzog research. Herzog first paper ever \cite{He}, titled \textit{Generators and relations of abelian semigroups and semigroup rings}, contains among other fundamental and basic results on semigroup rings, an explicit description of the minimal free resolution of the semigroup ring of any numerical semigroup with three generators. As Herzog remarked many times in his lifetime, this is just a special case of the Hilbert-Burch theorem, which, however, he did not know at the time as a student.

The Hilbert-Burch theorem gained again Herzog interest in the last few months of his life, as a motivation for Conjecture \ref{Conj:Her}, but also in the study of the so-called \textit{Fitting ideals} $\textup{Fitt}_j(I)$ of an ideal $I$. This led to a joint paper \cite{EFHM} with Eisenbud, Herzog, Moradi and the author of this paper. Such article, the third to last paper of Herzog, deals with the problem of characterizing those ideals $I$ of a Noetherian ring such that $I=\textup{Fitt}_j(I)$ for some $j$. In the case $j=1$, a certain converse of the Hilbert-Burch Theorem \ref{Thm:HB} was proved.

The outlines of the paper are as follows. In Section \ref{sec2} we present a wide class of perfect ideals of grade two for which Conjecture \ref{Conj:Her} holds true. Let $S=K[x_1,\dots,x_n]$ be a positively graded polynomial ring, and let $I\subset S$ be a perfect graded ideal of grade two. It is proved in Theorem \ref{Thm:genericallyci} that, if in addition $I$ is generically Gorenstein, then the canonical trace is the expected one, namely it is generated by the submaximal minors $I_{\mu(I)-2}(X)$, modulo $I$, of any Hilbert-Burch matrix $X$ of $I$. This result was already proved in \cite[Corollary 3.5]{FHSV22} by a different method. Our new proof is based on Lemma \ref{Lem:genGonCI} and some results of Avramov and Herzog \cite{AH}.

On the other hand, the inclusion $I_{\mu(I)-2}(X)R\subseteq\tr(\omega_{R})$ holds true always as we show in Lemma \ref{Lem:c_A}. Next, we characterize the perfect monomial ideals of height two which are generically Gorenstein. It turns out that these ideals are parameterized by finite simple graphs $G$ with $t$ edges and integer sequences ${\bf a}$ and ${\bf b}$ of length $t$ such that a certain graph $G({\bf a,b})$, constructed from such data, is cochordal. This is the content of Theorem \ref{Thm:G(a,b)}. The implication (b) $\Rightarrow$ (c) of the proof of this theorem is in part implicitly contained in the proof of \cite[Proposition 2.2]{Ahmad}.

In Section \ref{sec3} we consider, among monomial ideals, the simplest class of perfect monomial ideals of grade two, which are not necessarily generically Gorenstein. Namely, the Cohen-Macaulay monomial ideals in $S=K[x,y]$. For this class of ideals, we obtain in Lemma \ref{Lem:X-HilBur} a fairly explicit Hilbert-Burch matrix. As a consequence, we are able to show in Theorem \ref{Thm:mainzero} that Conjecture \ref{Conj:Her} holds for this class of ideals. Hence, in Proposition \ref{Prop:I_{a,b}-NG} we successfully classify the nearly Gorenstein ideals within this class.

As the final conclusion of all of our results, in Theorem \ref{Thm:I(G(a,b))-NG} we classify the nearly Gorenstein monomial ideals of height two.

\section{Perfect generically Gorenstein ideals of height two}\label{sec2}

Let $S=K[x_1,\ldots,x_n]$ be a graded polynomial ring over a field $K$ with $\deg x_i=d_i$ for $1\le i\le n$. Since $S$ is Cohen-Macaulay, by \cite[Corollary 2.1.4]{BH} for any ideal $I\subset S$ we have $\grade I=\textup{height}(I)$. Thus $I$ is perfect if and only if $I$ is Cohen-Macaulay.

Recall that $I\subset S$ is called \textit{generically Gorenstein} if $S_P/I_P$ is Gorenstein for any minimal prime $P\in\Ass(I)$. Whereas, $I$ is called \textit{generically a complete intersection} if $S_P/I_P$ is a complete intersection for any minimal prime $P\in\Ass(I)$. Recall that given an ideal $J$ of a Noetherian local (or graded) ring $T$ we have $\mu(J)\ge\height(J)$, and if equality holds $J$ is called a \textit{complete intersection}. If $T$ is Cohen-Macaulay, then $J$ is a complete intersection if and only if $J$ is generated by a regular sequence.

The goal of this section is to prove
\begin{Theorem}
	\label{Thm:genericallyci}
	Let $I\subset S$ be a graded perfect ideal of height two with $\mu(I)=m$. Let $X$ be an Hilbert-Burch matrix of $I$. Set $R=S/I$, and assume that $R$ is generically Gorenstein. Then 
	\[
	\tr(\omega_R)=I_{m-2}(X)/I.
	\]
\end{Theorem}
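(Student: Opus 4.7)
The plan is to establish the non-trivial inclusion $\tr(\omega_R)\subseteq I_{m-2}(X)R$, since the reverse inclusion is furnished by Lemma \ref{Lem:c_A}. The approach combines the Vasconcelos presentation $\tr(\omega_R)=I_1(\alpha)$ with the structural constraints imposed on the syzygy module $C=\Ker\psi$ by generic Gorensteinness.

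The first ingredient is a Fitting-ideal localization argument. Dualizing the Hilbert--Burch resolution yields the presentation $\omega_R=\Coker(X^T\colon R^m\to R^{m-1})$, so the first Fitting ideal of $\omega_R$ coincides with $I_{m-2}(X)R$. At any minimal prime $P$ of $R$, the hypothesis makes $R_P$ an Artinian local Gorenstein ring, whence $\omega_{R_P}$ is cyclic and the Fitting-ideal criterion for cyclicity forces $I_{m-2}(X)R_P=R_P$. I expect this to be the content of Lemma \ref{Lem:genGonCI}: for perfect grade two ideals, generic Gorensteinness is equivalent to generic complete intersection, because a height-two Gorenstein quotient of a two-dimensional regular local ring satisfies $\mu(I_P)=2$, which in terms of $X$ translates precisely to $I_{m-2}(X)\not\subseteq P$.

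The second ingredient is to promote the grade bound $\grade(I_{m-2}(X)R)\ge 1$ into an explicit description of the syzygy module $C$. This is where the results of Avramov and Herzog \cite{AH} enter: under appropriate grade hypotheses on the Fitting ideals of $X$, they identify $C$ with the submodule of $R^{m-1}$ generated by Koszul-type cycles whose entries are signed $(m-2)\times(m-2)$ minors of $X$. Consequently $I_1(\alpha)\subseteq I_{m-2}(X)R$, and combining with Lemma \ref{Lem:c_A} gives the desired equality.

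The delicate point is matching the grade condition produced by Lemma \ref{Lem:genGonCI} with the exact hypothesis under which the Avramov--Herzog description of $C$ applies, so that no exotic cycles beyond the Koszul-type ones enter $C$. Absent the generic Gorenstein assumption, $I_{m-2}(X)$ can lie in a minimal prime of $I$, new cycles in $C$ can appear with entries outside $I_{m-2}(X)R$, and the reverse inclusion $\tr(\omega_R)\subseteq I_{m-2}(X)R$ can fail; ruling out this obstruction is precisely the role played by the generic Gorenstein hypothesis.
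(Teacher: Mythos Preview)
Your first ingredient is sound and matches the content of Lemma \ref{Lem:genGonCI}: generic Gorensteinness forces $\mu(I_P)=2$ at every minimal prime, which is equivalent to $I_{m-2}(X)\not\subseteq P$, hence $\grade(I_{m-2}(X)R)\ge 1$. The gap is in the second ingredient. The results of Avramov--Herzog cited in the paper do \emph{not} furnish a description of $C=\Ker\psi$ as the submodule generated by the Koszul-type cycles $c_A$. What \cite{AH} provides are two different statements: that the Koszul homology modules $H_i(I;S)$ are maximal Cohen--Macaulay over $R$, and that the natural sequence $0\to H_1(I;S)\to\bigoplus R(-a_i)\to I/I^2\to 0$ is exact when $I$ is generically a complete intersection. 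Neither of these identifies $C$ explicitly, and neither is formulated as a grade condition on $I_{m-2}(X)$. Your sentence ``the delicate point is matching the grade condition\dots with the exact hypothesis under which the Avramov--Herzog description of $C$ applies'' is precisely where the argument evaporates: there is no such description in \cite{AH} to match to, and the bound $\grade(I_{m-2}(X)R)\ge 1$ alone is not known to force $C$ to be generated by the $c_A$'s.

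The paper's route is genuinely different and avoids this difficulty. It passes to the generic determinantal ring $R'=S[X']/I_{m-1}(X')$, where the equality $\tr(\omega_{R'})=I_{m-2}(X')R'$ is already known from \cite[Theorem 1.1]{FHSV22}, so that $I_1(\alpha')=I_{m-2}(X')R'$. One then reduces modulo the regular sequence $\mathbf{g}$ specializing $X'$ to $X$, and the entire problem becomes showing $\overline{C'}=C$. This is where \cite{AH} actually enters: the maximal Cohen--Macaulay property of $H_1$ guarantees the short exact sequence $0\to C'\to (R')^{m-1}\to \Im\psi'\to 0$ specializes exactly, and the identification $\Im\psi\cong H_1(I;S)$ (valid because $I$ is generically a complete intersection, via Lemma \ref{Lem:genGonCI}) guarantees that $\overline{\Im\psi'}$ injects into $R^m$. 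The generic Gorenstein hypothesis is thus used to control the specialization of $\Im\psi'$, not to bound the grade of $I_{m-2}(X)$ directly.
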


This result was already shown in \cite[Corollary 3.5]{FHSV22} with a different method. Here, we provide a different and somewhat easier argument.\medskip

We shall need the following
\begin{Lemma}\label{Lem:genGonCI}
	Let $I\subset S$ be a Cohen-Macaulay ideal of height two. Then, the following conditions are equivalent.
	\begin{enumerate}
		\item[\textup{(a)}] $I$ is generically Gorenstein.
		\item[\textup{(b)}] $I$ is generically a complete intersection.
	\end{enumerate}
\end{Lemma}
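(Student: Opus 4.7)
The implication (b) $\Rightarrow$ (a) is immediate and requires no work: if $S_P/I_P$ is a complete intersection in the regular local ring $S_P$, then $S_P/I_P$ is Gorenstein. The substance of the lemma lies in (a) $\Rightarrow$ (b).

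The plan for (a) $\Rightarrow$ (b) is to reduce, by localizing at a minimal prime of $I$, to the case of a zero-dimensional Gorenstein quotient of a regular local ring, and then read off the number of generators from the Hilbert-Burch resolution. Let $P \in \Ass(I)$ be a minimal prime. Since $I$ is Cohen-Macaulay (hence unmixed) of height two, $\height P = 2$, so $S_P$ is a regular local ring of dimension two and $I_P$ is an ideal of the same height. Since $I$ is perfect of grade two, so is $I_P$, and by Theorem \ref{Thm:HB} applied to $I_P$, there is a minimal free resolution
\[
0 \to S_P^{\mu-1} \xrightarrow{\varphi_P} S_P^{\mu} \to I_P \to 0,
\]
where $\mu = \mu(I_P)$. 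Splicing with the presentation of $S_P/I_P$ yields a minimal free resolution of $S_P/I_P$ over $S_P$ of length two whose last free module has rank $\mu - 1$.

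Now I would compute the Cohen-Macaulay type of $S_P/I_P$: since the ring $S_P$ is regular local and $S_P/I_P$ is a Cohen-Macaulay module of projective dimension equal to its codimension two, the type equals the rank of the last nonzero module in the minimal free resolution, namely $\mu - 1$. Because $S_P/I_P$ is Gorenstein by hypothesis (a), its type equals $1$, and therefore $\mu(I_P) = 2$. Since $\height I_P = 2$ as well, $I_P$ is minimally generated by a regular sequence in $S_P$, that is, $I_P$ is a complete intersection. This holds for every minimal prime $P$ of $I$, proving (b).

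There is no real obstacle in this argument; the only point requiring care is ensuring that Hilbert-Burch (Theorem \ref{Thm:HB}) applies after localization, which is automatic since $S_P$ is regular local and the perfectness of $I_P$ and its grade two are preserved under localization at a minimal prime of $I$. The whole argument hinges on the identity $\type(S/I) = \mu(I) - 1$ for perfect ideals of grade two, which is an immediate consequence of the Hilbert-Burch resolution.
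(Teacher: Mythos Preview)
Your proof is correct and follows essentially the same route as the paper: both localize at a minimal prime $P$, invoke the Hilbert-Burch resolution for the perfect height-two ideal $I_P$, and use that the Gorenstein hypothesis forces the Cohen-Macaulay type $\mu(I_P)-1$ to equal $1$, whence $I_P$ is a complete intersection. Your write-up is a bit more explicit about why localization preserves the hypotheses needed for Hilbert-Burch, but the argument is the same.
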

\begin{proof}
	That (b) implies (a) is clear. Conversely, assume (a) and let $P\in\Ass(I)$ be a minimal associated prime. Then $I_P$ is again a Cohen-Macaulay ideal of height two, and by assumption $S_P/I_P$ is Gorenstein. Thus $\mu(I_P)\ge2$ and the Cohen-Macaulay type of $I_P$ is one. By the Hilbert-Burch Theorem \ref{Thm:HB} we deduce that $I_P$ is generated by two elements. Hence $I_P$ is a complete intersection.
\end{proof}

Let $I\subset S$ be a perfect graded ideal of height two, generically a complete intersection  (equivalently, generically Gorenstein), and minimally generated by $m$ homogeneous polynomials $\fb=f_1,\ldots,f_m$ with $\deg f_i=a_i$ for $1\le i\le m$. Furthermore, let $X$ be an Hilbert-Burch matrix of $I$ with homogeneous entries. Set $R=S/I$, and consider the graded minimal free $S$-resolution 
\[
0\rightarrow\bigoplus_{j=1}^{m-1}S(-b_j)\xrightarrow{X}\bigoplus_{i=1}^mS(-a_i)\rightarrow I\rightarrow0
\]
of $I$, and the induced exact sequence 
\[
0\rightarrow C\rightarrow\bigoplus_{j=1}^{m-1}R(-b_j)\xrightarrow{\psi}\bigoplus_{i=1}^mR(-a_i)\rightarrow I/I^2\rightarrow0,
\]
where $C=\Ker\psi$ and $I/I^2$ is the so-called \textit{conormal module} of $I$.

\begin{proof}[Proof of Theorem \ref{Thm:genericallyci}]
Let $X'$ be the $(m-1)\times m$-matrix with entries the indeterminates $x_{ij}$, for $1\le i\le m$ and $1\le j\le m-1$, and let $S'=S[X']$ and $R'=S'/I'$, where $I'=I_{m-1}(X')$. We set $\deg x_{ij}=b_j-a_i$ for all $i$ and $j$. Then $I'$ is a  graded ideal in the non-standard graded polynomial ring $S'$ and $R$ is a specialization of $R'$. In other words, the sequence ${\bf g}=x_{11}-f_{11},\dots,x_{m,m-1}-f_{m,m-1}$ is a homogeneous regular sequence on $S'$ and $R'$ with $S'/({\bf g})=S$ and $R'/({\bf g})=R$.

We have the exact sequence
\[
0\rightarrow\bigoplus_{j=1}^{m-1}S'(-b_j)\xrightarrow{X'}\bigoplus_{i=1}^mS'(-a_i)\rightarrow I'\rightarrow0
\]
and the induced exact sequence
\begin{eqnarray}\label{sequenceprime}
0\rightarrow C'\rightarrow\bigoplus_{j=1}^{m-1}R'(-b_j)\xrightarrow{\psi'}\bigoplus_{i=1}^mR'(-a_i)\rightarrow I'/I'^2\rightarrow0,
\end{eqnarray}
where $C'=\Ker\psi'$.

We denote by the overline ``$\overline{\phantom{ll}}$" the reduction modulo $({\bf g})$. Then $\overline{R'}=R$, and the exact sequence (\ref{sequenceprime}) induces the exact sequences
\begin{eqnarray}\label{sequenceone}
\overline{C'}\rightarrow\bigoplus_{j=1}^{m-1}R(-b_j)\rightarrow\overline{\Im\psi'}\rightarrow0
\end{eqnarray}
and
\begin{eqnarray}\label{sequencetwo}
\overline{\Im\psi'}\rightarrow\bigoplus_{i=1}^mR(-a_i)\rightarrow\overline{ I'/I'^2}\rightarrow0.
\end{eqnarray}

Now we are going to show that:
\begin{enumerate}
\item[(i)] $\overline{I'/I'^2}=I/I^2$;
\item[(ii)] $\overline{C'}\rightarrow\bigoplus_{j=1}^{m-1}R(-b_j)$ is injective;
\item[(iii)] $\overline{\Im\psi'}\rightarrow\bigoplus_{i=1}^mR(-a_i)$ is injective.
\end{enumerate}
From (i), (ii) and (iii) it follows that
\[
0\rightarrow\overline{C'}\rightarrow\bigoplus_{j=1}^{m-1}R(-b_j)\xrightarrow{\psi}\bigoplus_{i=1}^mR(-a_i)\rightarrow I/I^2\rightarrow0,
\]
is exact, and this implies that $C=\overline{C'}$. This then yields the desired conclusion.

{\it Proof of} (i): We have the exact sequence
\begin{eqnarray}\label{true}
\overline{I'^2}\rightarrow\overline{I'}\rightarrow\overline{I'/I'^2}\rightarrow0.
\end{eqnarray}
Since ${\bf g}$ is a regular sequence on $R'$ and $S'$, the exact sequence $0\rightarrow I'\rightarrow S'\rightarrow R'\rightarrow0$
induces the exact sequence
\[
0\rightarrow\overline{I'}\rightarrow S\rightarrow R\rightarrow0.
\]
Since the image of the map $\overline{I'}\rightarrow S$ is equal to $I$ we see that $\overline{I'}$ is identified with $I$ via the injective map $\overline{I'}\rightarrow S$. Under this identification, the image of $\overline{I'^2}\rightarrow\overline{I'}$ in the exact sequence (\ref{true}) is identified with $I^2$. Thus, (\ref{true}) shows that $\overline{I'/I'^2}=I/I^2$.

\medskip
{\it Proof of} (ii): Here and in the proof of (iii) we use the following facts: Let $T$ be a polynomial ring over a field $K$, and let $J\subset T$ be a graded perfect ideal of height two, minimally generated by the homogeneous elements ${\bf h}=h_1,\ldots, h_m$. Let $H_i(J;T)$ be the Koszul homology $H_i({\bf h};T)$. This notation is justified because Koszul homologies do not depend on the minimal set of generators of $J$ \cite[Proposition 1.6.21]{BH}. Then,
\begin{enumerate}
\item[(a)] \cite[Theorem (2.1)]{AH} The Koszul homology modules $H_i(J;T)$ are maximal Cohen-Macaulay $T/J$-modules.\smallskip
\item[(b)] \cite[Theorem (3.2)]{AH} Let $\deg h_i=a_i$ for $1\le i\le m$. Then the natural sequence 
\[
0\rightarrow H_1(J;T)\rightarrow\bigoplus_{i=1}^m(T/J)(-a_i)\rightarrow J/J^2\rightarrow0
\]
is exact, if $J$ is generically a complete intersection.
\end{enumerate}

We may apply this to $I'\subset S'$. It follows from (\ref{sequenceprime}) that $\Im\psi'=H_1(I';S')$, since $R'=S'/I'$ is a domain by \cite[Theorem 7.3.1(c)]{BH}. Therefore,
\[
0\rightarrow C'\rightarrow\bigoplus_{j=1}^{m-1}R'(-b_j)\rightarrow\Im\psi'\rightarrow0
\]
is an exact sequence of maximal Cohen-Macaulay modules. Hence this sequence remains exact after reduction modulo the regular sequence ${\bf g}$. This proves (ii).

{\it Proof of} (iii): The statement (iii) is equivalent to saying that the natural map $\overline{\Im \psi'}\rightarrow\Im\psi$ is an isomorphism. Therefore by (b) it amounts to show the the natural map $\overline{H_1(I';S')}\rightarrow H_1(I;S)$ is an isomorphism.

The desired isomorphism follows by induction on the length of the sequence ${\bf g}$ from the following claim: let $J\subset T$ be as above a perfect ideal of height two, which is not necessarily a generically complete intersection. Let $g\in T$ be a homogeneous polynomial of degree $d$ which is regular on $T/J$. Then the natural map
$$
\overline{H_1(J;T)}\rightarrow H_1(J;\overline{T})
$$
is an isomorphism. (Note that $\overline{T/J}=\overline{T}/\overline{J}$ and $H_1(J;\overline{T})=H_1(\overline{J};\overline{T})$).

Observing that for any partial sequence of ${\bf g}$, the residue class ring modulo this partial sequence is again a polynomial ring, we can proceed with the induction by using the statement of the claim.

For the proof of the claim we consider the short exact sequence
\[
0\rightarrow T(-d)\xrightarrow{g}T\rightarrow\overline{T}\rightarrow0,
\]
which induces the long exact sequence of Koszul homology
\begin{eqnarray*}
\cdots\rightarrow H_1(J;T(-d))\xrightarrow{g}H_1(J;T)\rightarrow H_1(J;\overline{T})\rightarrow
H_0(J;T(-d))\xrightarrow{g}H_0(J;T).
\end{eqnarray*}
Note that $H_0(J;T)\cong T/J$. Since we assume that $g$ is regular on $T/J$, the map $H_0(J;T(-d))\xrightarrow{g}H_0(J;T)$ is injective. This implies that
$$
H_1(J;T(-d))\xrightarrow{g}H_1(J;T)\rightarrow H_1(J;\overline{T})\rightarrow0
$$
is exact. Since the kernel of the second map is the submodule of $H_1(J;T)$ generated by $g$, we see that the natural map $\overline{H_1(J;T)}\rightarrow H_1(J;\overline{T})$ is an isomorphism.
\end{proof}

On the other hand, the inclusion $I_{m-2}(X)R\subseteq\tr(\omega_R)$ is always true, without the generically Gorenstein assumption on $R$. For $m\ge1$, we set $[m]=\{1,2,\dots,m\}$.
\begin{Lemma}\label{Lem:c_A}
	Let $I\subset S$ be a graded perfect ideal of height two with $\mu(I)=m$ and let $a\in S$ be a non-zero element. Set $J=aI$ and $R=S/J$. The minimal free resolution of $J$ induces an exact sequence
	$$
	0\rightarrow C\xrightarrow{\alpha}R^{m-1}\xrightarrow{\psi}R^m\rightarrow J/J^2\rightarrow 0,
	$$
	where $C=\Ker\psi$ and $\alpha$ is the inclusion map. Then
	\[
	aI_{m-2}(\psi)\subseteq I_1(\alpha).
	\]
\end{Lemma}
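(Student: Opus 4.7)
The plan is to produce, for every $(m-2)\times(m-2)$ minor $\mu$ of the Hilbert-Burch matrix $A$ of $I$, an explicit syzygy in $C$ one of whose coordinates is $\pm a\mu$. Since $I_{m-2}(\psi) = I_{m-2}(A)R$, this will yield $aI_{m-2}(\psi) \subseteq I_1(\alpha)$.

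First I would put $J$ in a clean form. By the Hilbert-Burch Theorem \ref{Thm:HB}, one has $I = cI_{m-1}(A)$ for some non-zero-divisor $c \in S$; since $I \subseteq (c)$ and $\height(I) = 2$ would force $\height((c)) \ge 2$, which is impossible for a proper principal ideal, the element $c$ must be a unit. After absorbing units into $A$, we may assume $I = I_{m-1}(A)$ with generators $f_i = (-1)^i \Delta_i$, where $\Delta_i = \det A^{(i)}$ and $A^{(i)}$ denotes the $(m-1)\times(m-1)$ matrix obtained by deleting the $i$th row of $A$. Consequently $J = aI$ is minimally generated by $g_i = (-1)^i a\Delta_i$, and $0 \to S^{m-1} \xrightarrow{A} S^m \to J \to 0$ is its minimal free resolution.

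For each $r \in [m]$ and $k \in [m-1]$, let $v^{(r,k)} \in S^{m-1}$ denote the $k$th column of $\textup{adj}(A^{(r)})$. Its entries are, up to signs, the $(m-2)\times(m-2)$ minors of $A^{(r)}$, that is, minors of $A$ obtained by deleting row $r$, one more row, and one column. The adjugate identity $A^{(r)} \cdot \textup{adj}(A^{(r)}) = \Delta_r\, I_{m-1}$ gives $A^{(r)} v^{(r,k)} = \Delta_r\, e_k$. Letting $p_k$ be the $k$th element of $[m] \setminus \{r\}$ in natural order, it follows that $Av^{(r,k)} \in S^m$ vanishes in all positions $p \in [m] \setminus \{r, p_k\}$, equals $\Delta_r$ in position $p_k$, and has $r$th entry $\sum_{j=1}^{m-1} a_{rj}\,(v^{(r,k)})_j$. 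A Laplace expansion of $\det(A^{(p_k)})$ along the row of $A^{(p_k)}$ that corresponds to row $r$ of $A$ identifies this latter sum as $\pm \Delta_{p_k}$. Hence $Av^{(r,k)} = \Delta_r\, e_{p_k} \pm \Delta_{p_k}\, e_r$ in $S^m$.

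Multiplying by $a$ turns this into $A(av^{(r,k)}) = a\Delta_r\, e_{p_k} \pm a\Delta_{p_k}\, e_r = \pm g_r\, e_{p_k} \pm g_{p_k}\, e_r \in JS^m$, so the image of $av^{(r,k)}$ in $R^{m-1}$ lies in $C$. Its coordinates, which lie in $I_1(\alpha)$, are $\pm a$ times the corresponding entries of $v^{(r,k)}$, namely $\pm a$ times certain $(m-2)\times(m-2)$ minors of $A$. Letting $(r,k)$ range produces every $(m-2)\times(m-2)$ minor of $A$, since each such minor arises from deleting two rows and one column. Therefore $aI_{m-2}(A) \subseteq I_1(\alpha)$, which modulo $J$ is the desired inclusion. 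The main obstacle is the Laplace-expansion identification of the $r$th entry of $Av^{(r,k)}$ as $\pm\Delta_{p_k}$; the remaining steps reduce to sign and index bookkeeping.
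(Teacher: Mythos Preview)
Your proof is correct and follows essentially the same strategy as the paper's: both construct, for each $(m-2)$-element row set, an explicit element of $C$ whose coordinates are $\pm a$ times the $(m-2)$-minors of the Hilbert-Burch matrix with that fixed row set and varying deleted column. Your vectors $av^{(r,k)}$ coincide (up to a global sign) with the paper's elements $c_A$ for $A=[m]\setminus\{r,p_k\}$; you reach $\psi(av^{(r,k)})=\pm a\Delta_r\,{\bf e}_{p_k}\pm a\Delta_{p_k}\,{\bf e}_r$ via the adjugate identity plus a Laplace expansion, whereas the paper computes $\psi(c_A)=\sum_i\pm a[A_i\mid[m-1]]\,{\bf e}_i$ directly and observes that each surviving maximal minor lies in $J$---the two computations are the same identity written differently.
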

\begin{proof}
	Let ${\bf f}_1,\dots,{\bf f}_{m-1}$ and ${\bf e}_1,\dots,{\bf e}_m$ be two bases for $R^{m-1}$ and $R^{m}$. By the Hilbert-Burch Theorem \ref{Thm:HB}, $\psi$ is represented by the image in $R$ of an homogeneous $m\times(m-1)$ Hilbert-Burch matrix $X=(a_{ij})$ of $I$ with respect to the given bases. For an element $f\in S$, we denote again by $f$ the image in $R$.
	
	For two subsets $A\subset[m]$, $B\subset[m-1]$ of the same size, we denote by $[A|B]$ the minor of $X$ whose rows are indexed by $A$ and whose columns are indexed by $B$.
	
	For any $A\subset[m]$ with $|A|=m-2$ we set
	\[
	c_A\ =\ \sum_{j=1}^{m-1}(-1)^{j+1}a[A|[m-1]\setminus\{j\}]{\bf f}_j.
	\]
	Note that 
	\begin{eqnarray*}
		\psi(c_A) &=& \sum_{j=1}^{m-1}(-1)^{j+1}a[A|[m-1]\setminus\{j\}]
		\sum_{i=1}^ma_{ij}{\bf e}_i\\
		&=& \sum_{i=1}^m(\sum_{j=1}^{m-1}
		a(-1)^{j+1}a_{ij}[A|[m-1]\setminus\{j\}]){\bf e}_i\\
		&=& \sum_{i=1}^m\pm a[A_i|[m-1]]{\bf e}_i,
	\end{eqnarray*}
	where $A_i$ is obtained from $A$ by adding the $i$th column of $X$ to $A$. If $i\in A$, then $[A_i|[m-1]]=0$, and if $i\notin A$, then $a[A_i|[m-1]]\in aI_{m-1}(X)=aI=J$ and hence is equal to $0$ in $R$. Therefore, $c_A\in C$ for all $A\subset [m]$ with $|A|=m-2$. This shows that $aI_{m-2}(\psi)\subseteq I_1(\alpha)$, and concludes the proof.
\end{proof}

Next, we provide a structure theorem for perfect monomial ideals of height two which are generically Gorenstein. To state this result, we need some preparation.

Hereafter, $S$ denotes the standard graded polynomial ring. That is $\deg x_i=1$ for all $i$. Let $u={\bf x^a}=x^{a_1}\cdots x_n^{a_n}\in S$ be a monomial, where ${\bf a}=(a_1,\dots,a_n)\in\ZZ_{\ge0}^n$. The \textit{polarization} of $u$ is the monomial
$$
u^\wp=\prod_{i=1}^n(\prod_{j=1}^{a_i}x_{i,j})=\prod_{\substack{1\le i\le n\\ a_i>0}}x_{i,1}x_{i,2}\cdots x_{i,a_i}.
$$

Whereas, the \textit{polarization} of a monomial ideal $I\subset S$, with minimal monomial generating set $G(I)$, is defined as the monomial ideal $I^\wp$ with minimal generating set $G(I^\wp)=\{u^\wp:u\in G(I)\}$, in the polynomial ring $S^\wp$ in the variables $x_{i,j}$. Given monomial ideals $I_1,\dots,I_t\subset S$, we have $(I_1\cap\dots\cap I_t)^\wp=I_1^\wp\cap\dots\cap I_t^\wp$.

Let $G$ be a finite simple graph with vertex set $\{x_1,\dots,x_n\}$ with $t$ edges, and let ${\bf a}:a_1,\dots,a_t$ and ${\bf b}:b_1,\dots,b_t$ be two sequences of positive integers. Suppose that $E(G)=\{\{x_{i_\ell},x_{j_\ell}\}:1\le\ell\le t\}$. We associate to $G$, ${\bf a}$ and ${\bf b}$ the height-unmixed height two monomial ideal of $S=K[x_1,\dots,x_n]$,
\begin{equation}\label{eq:G(a,b)-I}
	I_{G,{\bf a},{\bf b}}\ =\ \bigcap_{\ell=1}^t(x_{i_\ell}^{a_\ell},x_{j_\ell}^{b_\ell}).
\end{equation}
We define the \textit{intersection graph} $G({\bf a,b})$ of $I_{G,{\bf a},{\bf b}}$ as follows:

The vertex set of $G({\bf a,b})$ is
$$
V(G({\bf a,b}))\ =\ \bigcup_{i=1}^n\big\{x_{i,j}\ :\ 1\le j\le\max_{\ell,k}(\{a_\ell:a_\ell=i\}\cup\{b_k:b_k=i\})\big\},
$$
and the edge set of $G({\bf a,b})$ is
$$
E(G({\bf a,b}))\ =\ \bigcup_{\ell=1}^t(\bigcup_{p=1}^{a_\ell}\bigcup_{q=1}^{b_\ell}\{x_{i_\ell,p},x_{j_\ell,q}\}).
$$

Finally, we recall that a graph $G$ is called \textit{chordal} if it has no induced cycles of length bigger than three. Whereas, $G$ is called \textit{cochordal} if the complementary graph $G^c$ is chordal. Here, the \textit{complementary graph} $G^c$ of $G$ is the graph with the same vertex set of $G$ and whose edges are the non edges of $G$.

For unexplained facts of the theory of Alexander duality on monomial ideals we refer the reader to \cite[Subsections 1.5.3 and 8.1.2]{HH2011}.
\begin{Theorem}\label{Thm:G(a,b)}
	Let $I\subset S=K[x_1,\dots,x_n]$ be a monomial ideal. Then, the following conditions are equivalent.
	\begin{enumerate}
		\item[\textup{(a)}] $I$ is a perfect monomial ideal of height two which is generically a complete intersection.
		\item[\textup{(b)}] $I$ is a perfect monomial ideal of height two which is generically Gorenstein.
		\item[\textup{(c)}] $I=I_{G,{\bf a,b}}$ for some graph $G$ with $t$ edges, and some integer sequences ${\bf a}$, ${\bf b}$ of length $t$ such that $G({\bf a,b})$ is a cochordal graph.
	\end{enumerate}
\end{Theorem}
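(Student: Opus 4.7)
The equivalence $(a) \Leftrightarrow (b)$ is immediate from Lemma \ref{Lem:genGonCI}, so the substance of the theorem lies in the equivalence $(b) \Leftrightarrow (c)$.

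For $(b) \Rightarrow (c)$, I would start from the primary decomposition of $I$. Since $I$ is a height two Cohen-Macaulay monomial ideal, every associated prime of $I$ is a minimal monomial prime of height two, hence of the form $(x_{i_\ell}, x_{j_\ell})$; these two-element sets are naturally the edges of a simple graph $G$ on $\{x_1,\dots,x_n\}$. The generically Gorenstein hypothesis, combined with Lemma \ref{Lem:genGonCI}, forces each primary component of $I$ to be a complete intersection, and since the $(x_{i_\ell}, x_{j_\ell})$-primary component is monomial it must take the shape $(x_{i_\ell}^{a_\ell}, x_{j_\ell}^{b_\ell})$. Consequently $I = I_{G,{\bf a},{\bf b}}$, in the notation of \eqref{eq:G(a,b)-I}.

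To translate the Cohen-Macaulay property of $I = I_{G,{\bf a},{\bf b}}$ into a statement about $G({\bf a},{\bf b})$, the decisive step is the polarization identity
\[
I_{G,{\bf a},{\bf b}}^\wp \;=\; \bigcap_{\ell=1}^{t}\bigcap_{p=1}^{a_\ell}\bigcap_{q=1}^{b_\ell}(x_{i_\ell,p},\,x_{j_\ell,q}),
\]
which follows from the commutation of polarization with intersection together with the elementary identity $(x_i^a, x_j^b)^\wp = (x_{i,1}\cdots x_{i,a},\,x_{j,1}\cdots x_{j,b}) = \bigcap_{p,q}(x_{i,p}, x_{j,q})$, the last equality being the primary decomposition of the squarefree ideal on the left and checked by a direct monomial-divisibility argument. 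By the very definition of the intersection graph, the right-hand side above is precisely the cover ideal $J(G({\bf a},{\bf b}))$, namely the Alexander dual of the edge ideal $I(G({\bf a},{\bf b}))$. Since polarization preserves the Cohen-Macaulay property, the Eagon-Reiner theorem yields that $I$ is Cohen-Macaulay if and only if $I(G({\bf a},{\bf b}))$ has a linear resolution, and Fr\"oberg's theorem identifies this with the cochordality of $G({\bf a},{\bf b})$. This completes $(b) \Rightarrow (c)$.

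For the converse $(c) \Rightarrow (b)$, reversing the same chain of equivalences shows that cochordality of $G({\bf a},{\bf b})$ forces $I_{G,{\bf a},{\bf b}}$ to be Cohen-Macaulay of pure height two; moreover, each of its primary components $(x_{i_\ell}^{a_\ell}, x_{j_\ell}^{b_\ell})$ is a two-generated complete intersection, so $I_{G,{\bf a},{\bf b}}$ is automatically generically a complete intersection (hence generically Gorenstein by Lemma \ref{Lem:genGonCI}). I expect the main obstacle to be the careful bookkeeping in the polarization step: one must choose a common polynomial ring $S^\wp$ containing enough variables $x_{i,j}$ to polarize all the $(x_{i_\ell}^{a_\ell}, x_{j_\ell}^{b_\ell})$ simultaneously, verify the elementary identity $(x_i^a, x_j^b)^\wp = \bigcap_{p,q}(x_{i,p}, x_{j,q})$ inside this ambient ring, and ensure that the commutation of polarization with intersection is applicable to the whole family at once; once this is set up, Eagon-Reiner and Fr\"oberg deliver the conclusion essentially for free.
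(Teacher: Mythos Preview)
Your argument is correct and follows the same overall architecture as the paper's proof: establish that $I=I_{G,{\bf a},{\bf b}}$ via primary decomposition, then pass through polarization, Alexander duality, Eagon--Reiner, and Fr\"oberg. The polarization identity you write down and its consequences match the paper exactly.

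The one genuine difference is how you reach the shape $(x_{i_\ell}^{a_\ell},x_{j_\ell}^{b_\ell})$ for each primary component. You invoke Lemma~\ref{Lem:genGonCI} to pass from generically Gorenstein to generically complete intersection, then use $\mu(I_{P_\ell})=2$ together with the elementary observation that a two-generated $(x,y)$-primary monomial ideal must be $(x^a,y^b)$. The paper instead takes the standard decomposition into irreducible monomial ideals (which a priori may have several components sharing a radical), localizes, and then appeals forward to Lemmas~\ref{Lem:X-HilBur} and~\ref{Lem:S/I_{a,b}CM} in Section~\ref{sec3} to show that a Gorenstein monomial ideal in $K[x,y]$ has exactly two generators, hence only one irreducible component per radical. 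Your route is shorter and avoids the forward references; the paper's route has the virtue of making explicit why no two irreducible components can share a radical. One small point you leave implicit: the step ``$\mu(I_{P_\ell})=2$ implies the $P_\ell$-primary component is two-generated'' uses that for monomial ideals the $P$-primary component coincides with the monomial localization $I(P)$, which lives in the graded ring $K[x_{i_\ell},x_{j_\ell}]$ and hence has the same minimal number of generators as its localization. The paper makes this passage to monomial localization explicit.
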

\begin{proof}
	(a) $\Rightarrow$ (b) This implication is trivial.
	
	(b) $\Rightarrow$ (c) Since $I$ is perfect, $S/I$ is Cohen-Macaulay, and thus $I$ is height-unmixed of height two. We are going to prove that $I=I_{\bf G,{\bf a,b}}$ for some graph $G$ and some integer sequences ${\bf a,b}$ such that $G({\bf a,b})$ is cochordal. Let $I=Q_1\cap\dots\cap Q_t$ be the standard primary decomposition of $I$, cfr \cite[Subsection 1.3.2, page 12]{HH2011}. Then $Q_\ell=(x_{i_\ell}^{a_\ell},x_{j_\ell}^{b_\ell})$ for all $1\le\ell\le t$. Let $P_\ell=\sqrt{Q_\ell}=(x_{i_\ell},x_{j_\ell})$ for all $\ell$. Hence $\textup{Ass}(I)=\{P_1,\dots,P_t\}$. Applying localization and \cite[Proposition 4.9]{AtiyahMacDonald} we obtain
	\begin{equation}\label{eq:PrimDP_i}
		I_{P_k}=(\bigcap_{\ell=1}^tQ_\ell)_{P_k}=\bigcap_{\ell=1}^{t}(Q_\ell)_{P_k}=\bigcap_{\ell\ :\ P_\ell=P_k}(Q_\ell)_{P_k}.
	\end{equation}
	
	We may replace ordinary localization with the so-called monomial localization. Indeed, recall that for a monomial ideal $L\subset S$, the \textit{monomial localization} of $L$ with respect to $P=(x_{i_1},\dots,x_{i_r})$ is the monomial ideal $L(P)$ in the polynomial ring $S(P)=K[x_{i_1},\dots,x_{i_r}]$ obtained by applying the substitutions $x_i\mapsto 1$ for $x_i\notin P$. Since we have the equality $L(P)S_P=L S_P$, we can indeed replace ordinary localization with monomial localization. Thus, equation (\ref{eq:PrimDP_i}) becomes
	\begin{equation}\label{eq:I(P_k)}
		I(P_k)=\bigcap_{\ell\ :\ P_\ell=P_k}Q_\ell.
	\end{equation}

	Since $I$ is generically Gorenstein, $I(P_k)$ must be a Gorenstein ideal of $K[x_{i_k},x_{j_k}]$. Let $p$ be the projective dimension of $I(P_k)$. The Cohen-Macaulay type of $I(P_k)$ must be $\beta_p(I(P_k))=1$. Notice that $I(P_k)$ has at least two generators. It follows from Lemma \ref{Lem:X-HilBur}, in the next section, that $p=1$ and $\mu(I(P_k))=\beta_1(I(P_k))+1$ is exactly two. To simplify the notation, put $x=x_{i_\ell}$ and $y=x_{j_\ell}$. Thus $I(P_k)=(x^{c_1}y^{d_1},x^{c_2}y^{d_2})$ with $c_1>c_2\ge0$ and $0\le d_1<d_2$. Since $I(P_k)$ is Cohen-Macaulay and not a principal ideal, Lemma \ref{Lem:S/I_{a,b}CM} implies that $c_2=d_1=0$. Thus $I(P_k)=(x^{c_1},y^{d_2})$ is $(x,y)$-primary and since (\ref{eq:I(P_k)}) is again a standard primary decomposition of $I(P_k)$, it follows that there is only one $\ell$ such that $P_\ell=P_k$, namely $\ell=k$.
	
	Summarizing our argument, we have shown that all components $Q_1,\dots,Q_t$ have a different radical. Thus $\{i_p,j_p\}\ne\{i_q,j_q\}$ for all $1\le p<q\le t$. This fact shows that $I=I_{G,{\bf a,b}}$ for some finite simple graph $G$ on vertex set $\{x_1,\dots,x_n\}$ with $t$ edges and with ${\bf a}:a_1,\dots,a_t$ and ${\bf b}:b_1,\dots,b_t$.
	
	Since polarization commutes with intersections, we have
	$$
	I^\wp=I_{G,{\bf a,b}}^\wp=\bigcap_{\ell=1}^t(\prod_{p=1}^{a_\ell}x_{i_\ell,p},\prod_{q=1}^{b_\ell}x_{j_\ell,q})=\bigcap_{\ell=1}^t[\bigcap_{p=1}^{a_\ell}\bigcap_{q=1}^{b_\ell}(x_{i_\ell,p},x_{j_\ell,q})]=I_{G({\bf a,b}),{\bf 1,1}},
	$$
	where ${\bf 1}:1,1,\dots,1$ is a sequence of length $\sum_{\ell=1}^ta_\ell b_\ell$.
	
	It follows from \cite[Corollary 1.6.3]{HH2011} that $S/I$ is a Cohen-Macaulay ring if and only if $S^\wp/I^\wp$ is such. Notice that the Alexander dual $(I^\wp)^\vee$ of $I^\wp$ is the edge ideal of the graph $G({\bf a,b})$. It follows from the Eagon-Reiner criterion \cite[Theorem 8.1.9]{HH2011} that $S^\wp/I^\wp$ is Cohen-Macaulay if and only if $(I^\wp)^\vee$ has a linear resolution. By Fr\"oberg theorem \cite[Theorem 9.2.3]{HH2011} this is equivalent to $G({\bf a,b})$ being a cochordal graph. Assertion (c) follows.
	
	(c) $\Rightarrow$ (a) Suppose $I=I_{G,{\bf a,b}}$ and $G({\bf a,b})$ is cochordal. It follows by the very definition and structure of $I_{G,{\bf a,b}}$, that this ideal has height two and that $S/I$ is generically a complete intersection. As before, we have $I^\wp=I_{G({\bf a,b}),{\bf 1,1}}$ and by Fr\"oberg theorem \cite[Theorem 9.2.3]{HH2011} $(I^\wp)^\vee$ has linear resolution because it is the edge ideal of a cochordal graph. By \cite[Corollary 1.6.3]{HH2011} and the Eagon-Reiner criterion \cite[Theorem 8.1.9]{HH2011} we conclude that $I$ is perfect.
\end{proof}

\section{The canonical trace of Cohen-Macaulay algebras $K[x,y]/I$}\label{sec3}

Let $S=K[x,y]$. The set of monomial ideals $I\subset S$ is in bijection with the set of all pairs $({\bf a},{\bf b})$ of integer sequences satisfying the conditions
\begin{equation}\label{eq:a-b}
	{\bf a}:a_1>a_2>\cdots > a_m\ge 0\,\,\,\,\,\,\,\,\textup{and}\,\,\,\,\,\,\,\,{\bf b}:0\le b_1<b_2<\cdots<b_m.
\end{equation}
Indeed, if $I$ is a monomial ideal and $G(I)$ is its minimal monomial generating set, then $G(I)=\{x^{a_1}y^{b_1},x^{a_2}y^{b_2},\dots,x^{a_m}y^{b_m}\}$ for two sequences ${\bf a}$ and ${\bf b}$ as above. Conversely, if ${\bf a}$ and ${\bf b}$ are two sequences as above, then $x^{a_1}y^{b_1},x^{a_2}y^{b_2},\dots,x^{a_m}y^{b_m}$ is the minimal generating set of a monomial ideal $I\subset S$.

Hereafter, we set $I_{\bf a,b}=(x^{a_1}y^{b_1},x^{a_2}y^{b_2},\dots,x^{a_m}y^{b_m})$, where ${\bf a}$ and ${\bf b}$ are as in (\ref{eq:a-b}).\medskip

The aim of this section is to prove:
\begin{Theorem}
	\label{Thm:mainzero}
	Let $I\subset S=K[x,y]$ be a monomial ideal such that $R=S/I$ is Cohen-Macaulay. Let $X$ be an Hilbert-Burch matrix of $I$. Then
	$$
	\tr(\omega_R)=I_{\mu(I)-2}(X)/I.
	$$
\end{Theorem}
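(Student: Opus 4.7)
My plan is to reduce the statement to a monomial/combinatorial problem by exploiting the explicit Hilbert-Burch matrix of $I=I_{\bf a,b}$ furnished by Lemma~\ref{Lem:X-HilBur}. Since $R=S/I$ is Cohen-Macaulay of height two, one has $a_m=0$ and $b_1=0$, and $X$ will be the bidiagonal matrix with $X_{i,i}=y^{b_{i+1}-b_i}$ on the main diagonal and $X_{i+1,i}=-x^{a_i-a_{i+1}}$ on the subdiagonal. The inclusion $I_{m-2}(X)R\subseteq \tr(\omega_R)$ is immediate from Lemma~\ref{Lem:c_A} applied with $a=1$.

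For the reverse inclusion I will use the $\NN^2$-grading of $S$. Since $\tr(\omega_R)=I_1(\alpha)=I_1(C)$ is multigraded, it suffices to prove that any entry of any homogeneous syzygy $c\in C=\Ker\psi$ lies in $I_{m-2}(X)R$. A direct computation of the column degrees of $\psi$ shows that in multidegree $(u,v)\in\NN^2$ one has $c_i=\lambda_ix^{u-a_i}y^{v-b_{i+1}}$ for scalars $\lambda_i\in K$ (with $\lambda_i:=0$ whenever an exponent is negative). The kernel condition $\psi(c)=0$ then translates, upon setting $\lambda_0=\lambda_m:=0$ and $M_i:=x^{u-a_i}y^{v-b_i}$, into the clean combinatorial condition
\[
\lambda_i=\lambda_{i-1}\quad\text{or}\quad M_i\in I,\qquad i=1,\ldots,m.
\]

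The remainder is to show: whenever $\lambda_k\ne0$, the monomial $x^{u-a_k}y^{v-b_{k+1}}$ lies in $I_{m-2}(X)+I$. Since the sequence $(\lambda_i)$ is piecewise constant with possible jumps only at indices $i$ for which $M_i\in I$, and since $\lambda_0=\lambda_m=0$, there exist $i_1,i_2\in[1,m]$ with $i_1\le k<i_2$ and $M_{i_1},M_{i_2}\in I$. These memberships yield $\ell_1,\ell_2\in[1,m]$ with $a_{\ell_h}+a_{i_h}\le u$ and $b_{\ell_h}+b_{i_h}\le v$ for $h=1,2$. I then aim to produce a pair $(r,s)$ with $r\le k<s$ and $a_r+a_s\le u$, $b_r+b_s\le v$: for then the $(m-2)\times(m-2)$-minor of $X$ obtained by deleting column $k$ and rows $r,s$, a direct computation for the bidiagonal $X$ shows this equals $\pm x^{a_r+a_s-a_k}y^{b_r+b_s-b_{k+1}}$, which divides $x^{u-a_k}y^{v-b_{k+1}}$ in $S$.

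Three cases will produce the pair $(r,s)$: (A) if $\ell_1\ge k+1$ take $(r,s)=(i_1,\ell_1)$; (B) if $\ell_2\le k$ take $(r,s)=(\ell_2,i_2)$; (C) in the remaining case $\ell_1\le k<\ell_2$ take $(r,s)=(i_1,i_2)$ and use the strict monotonicity of ${\bf a}$ and ${\bf b}$, namely $a_{\ell_1}>a_{i_2}$ since $\ell_1<i_2$, and $b_{\ell_2}>b_{i_1}$ since $\ell_2>i_1$, to promote the bounds from $(\ell_1,\ell_2)$-data to $(i_1,i_2)$-data. Case (C) is the main obstacle, as the constraints from $\ell_1,\ell_2$ do not directly bound $a_{i_1}+a_{i_2}$ and $b_{i_1}+b_{i_2}$; resolving it truly uses the fact that ${\bf a}$ is strictly decreasing and ${\bf b}$ is strictly increasing, which is precisely the minimality condition on the generators of $I_{\bf a,b}$.
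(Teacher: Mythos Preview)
Your argument is correct. The setup via the $\NN^2$-grading, the translation of $\psi(c)=0$ into the piecewise-constant condition on $(\lambda_i)$ with jumps only where $M_i\in I$, the existence of $i_1\le k<i_2$ with $M_{i_1},M_{i_2}\in I$, and the three-case production of $(r,s)$ all check out; in particular your computation of the minor $[[m]\setminus\{r,s\}\mid[m-1]\setminus\{k\}]=\pm x^{a_r+a_s-a_k}y^{b_r+b_s-b_{k+1}}$ for $r\le k<s$ is correct for the bidiagonal $X$ (and uses $a_m=b_1=0$). Case~(C) is handled exactly by the strict monotonicity, as you say.

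Your route, however, differs from the paper's. The paper proves the more general Theorem~\ref{Thm:mainzero1}, valid for \emph{any} $I_{\bf a,b}$ (not just Cohen--Macaulay ones), obtaining $I_1(\alpha)=x^{a_m}y^{b_1}I_{m-2}(\psi)$, and then specializes to $a_m=b_1=0$. Its proof is by induction on $m$: one takes a multigraded ${\bf w}\in\Ker\psi$, records the two endpoints $p,q$ of its support, extracts divisibility witnesses $h,k$ from $w_p$ and $w_q$, and either finishes directly or observes that the support of ${\bf w}$ is confined to a proper subinterval $[r,m]$ (or $[1,s]$), whence ${\bf w}$ is a syzygy for a strictly smaller bidiagonal matrix $X_1$ and induction applies. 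By contrast, you fix a single coordinate $k$ and exhibit one explicit $(m-2)$-minor dividing $c_k$; this is a direct, non-inductive, and more constructive argument. What the paper's approach buys is the extra generality (the factor $x^{a_m}y^{b_1}$ when $I$ is not Cohen--Macaulay); what yours buys is a shorter and more transparent proof in the Cohen--Macaulay case, with an explicit witnessing minor for every coordinate.
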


Theorem \ref{Thm:mainzero} follows from the following slightly more general statement.
\begin{Theorem}\label{Thm:mainzero1}
	Let $I=I_{\bf a,b}\subset S=K[x,y]$ be a monomial ideal with $\mu(I)=m\ge2$. Let $\FF:0\rightarrow S^{m-1}\xrightarrow{\varphi}S^m\rightarrow I\rightarrow 0$ be the minimal free resolution of $I$. Set $R=S/I$. We have the exact sequence
	$$
	0\rightarrow C\xrightarrow{\alpha}R^{m-1}\xrightarrow{\psi}R^m\rightarrow I/I^2\rightarrow0,
	$$
	where $C=\Ker\psi$, $\alpha$ is the inclusion map and  $\psi=\varphi\otimes\textup{id}_R$. In particular,
	$$
	I_1(\alpha)\ =\ x^{a_m}y^{b_1}I_{m-2}(\psi).
	$$
\end{Theorem}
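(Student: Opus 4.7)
The plan is as follows. By Lemma \ref{Lem:X-HilBur}, the Hilbert--Burch matrix of $I_{\mathbf{a},\mathbf{b}}$ is the bidiagonal matrix with $y^{b_{j+1}-b_j}$ on the main diagonal and $-x^{a_j-a_{j+1}}$ on the subdiagonal; from this one reads off that the Hilbert--Burch regular element is $x^{a_m} y^{b_1}$, so Lemma \ref{Lem:c_A} yields the inclusion $x^{a_m} y^{b_1} I_{m-2}(\psi) \subseteq I_1(\alpha)$ modulo $I$. The bulk of the work is the reverse inclusion.

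Since $I$ is monomial, the entire resolution---and hence $C = \ker \psi$---is $\ZZ^2$-multigraded, so it suffices to show that the entries of every bihomogeneous element of $C$ lie in $x^{a_m} y^{b_1} I_{m-2}(\psi) + I$. I would write a bihomogeneous element of bidegree $(P,Q)$ as $\mathbf{r} = \sum_j \lambda_j\, x^{P-a_j} y^{Q-b_{j+1}}\, \fb_j$ with scalars $\lambda_j \in K$ (with $\lambda_j = 0$ whenever the monomial is undefined); using the bidiagonal form of $\psi$, a direct computation gives
$$
\psi(\mathbf{r}) \ = \ \sum_{i=1}^{m}(\lambda_i - \lambda_{i-1})\, x^{P-a_i} y^{Q-b_i}\, \eb_i, \qquad \lambda_0 = \lambda_m = 0.
$$
The condition $\psi(\mathbf{r})=0$ forces, for each $i \in [m]$, either $\lambda_i = \lambda_{i-1}$ or $x^{P-a_i} y^{Q-b_i} \in I$ (the undefined case being tacitly included). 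Setting
$$
M(P,Q) \ := \ \{i \in [m] : x^{P-a_i} y^{Q-b_i} \in I \text{ or is undefined}\},
$$
the tuple $(\lambda_\bullet)$ must be constant on each maximal interval of $[0,m]$ whose interior edges avoid $M(P,Q)$; blocks containing $0$ or $m$, or any index outside the range in which the monomials $x^{P-a_j} y^{Q-b_{j+1}}$ are defined, are forced to vanish.

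A nontrivial bihomogeneous element of $C$ therefore arises from a middle block $[p,q-1]$ with $p,q \in M(P,Q)\cap [1,m]$, whose only potentially nonzero entries are $r_j = \lambda\, x^{P-a_j} y^{Q-b_{j+1}}$ for $j \in [p, q-1]$. Assuming such a monomial is not already in $I$, it suffices to exhibit indices $1 \leq P' \leq K' < Q' \leq m$ with $K'=j$ and $P \geq a_{P'}+a_{Q'}$, $Q \geq b_{P'}+b_{Q'}$, so that the generator $x^{a_{P'}+a_{Q'}-a_j} y^{b_{P'}+b_{Q'}-b_{j+1}}$ of $x^{a_m}y^{b_1} I_{m-2}(\psi)$ divides $x^{P-a_j}y^{Q-b_{j+1}}$. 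The case analysis I would carry out is: \textbf{(a)} if $P \geq a_p+a_q$ and $Q \geq b_p+b_q$, take $(P',Q')=(p,q)$; \textbf{(b)} if $P < a_p+a_q$, every $\ell$ with $a_\ell \leq P-a_p$ and $b_\ell \leq Q-b_p$ (such $\ell$ exists since $p \in M$) must satisfy $\ell > q$, so the largest such $\ell_p^{+}$ obeys $\ell_p^{+} > q > j$, and moreover $\ell_p^{+} \in M(P,Q)$ because $\ell = p$ divides $x^{P-a_{\ell_p^{+}}}y^{Q-b_{\ell_p^{+}}}$, whence $(P',Q')=(p,\ell_p^{+})$ works; \textbf{(c)} if $Q < b_p+b_q$, the symmetric argument with the smallest valid divisor index $\ell_q^{-}$ for $q$ gives $\ell_q^{-} < p \leq j$, $\ell_q^{-} \in M(P,Q)$, and $(P',Q')=(\ell_q^{-},q)$ works. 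In all three cases the required inequalities follow directly from the definitions of $\ell_p^{+}$ and $\ell_q^{-}$.

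The principal difficulty is the combinatorial bookkeeping of middle blocks as $(P,Q)$ varies, together with the key observation that the extremal divisor indices $\ell_p^{+}$ and $\ell_q^{-}$ are themselves cut points in $M(P,Q)$---verified by using $p$ (respectively $q$) as the divisor---since this is exactly what enables cases (b) and (c) to close without circular reasoning.
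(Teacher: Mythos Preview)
Your argument is correct and shares its overall skeleton with the paper's proof: both reduce to bihomogeneous kernel elements, both decompose such an element into ``blocks'' on which the recursion $y^{b_{i+1}-b_i}w_i=x^{a_{i-1}-a_i}w_{i-1}$ holds (your constant-$\lambda$ intervals; the paper's reduction to equation~(\ref{eq:w2})), and both then analyze a single block $[p,q-1]$ using the fact that the boundary monomials $x^{P-a_p}y^{Q-b_p}$ and $x^{P-a_q}y^{Q-b_q}$ lie in $I$. The genuine difference is in this last step. The paper picks particular divisor indices $h$ and $k$, runs a four-way case split on $(h,k,p,q)$, and in the generic case passes to the submatrix $X_1$ of $X$ on rows and columns $[r,m]$, finishing by \emph{induction on $m$}. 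You instead exploit the explicit description of the nonzero $(m-2)$-minors of the bidiagonal matrix---namely $[[m]\setminus\{P',Q'\}\mid[m-1]\setminus\{j\}]=\pm\,x^{a_{P'}+a_{Q'}-a_j-a_m}y^{b_{P'}+b_{Q'}-b_{j+1}-b_1}$ for $P'\le j<Q'$---and your three-case split (a)/(b)/(c) produces the pair $(P',Q')$ directly, with no induction. This is a bit more streamlined; the price is that you should state and justify the minor formula explicitly rather than invoke it tacitly. One small remark: the observation that $\ell_p^{+}\in M(P,Q)$ (and symmetrically for $\ell_q^{-}$), which you flag as the key to avoiding circularity, is true but not actually needed---once you know $\ell_p^{+}>q>j$ and $a_{\ell_p^{+}}\le P-a_p$, $b_{\ell_p^{+}}\le Q-b_p$, the choice $(P',Q')=(p,\ell_p^{+})$ already closes case~(b) without any further appeal to $M(P,Q)$.
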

For the proof of this result we need the following lemma.

\begin{Lemma}\label{Lem:X-HilBur}
	Let $m\ge2$. The minimal free resolution $\FF$ of $I_{\bf a,b}$ is
	$$
	\FF\ \ :\ \ 0\rightarrow\bigoplus_{i=1}^{m-1}S{\bf f}_i\xrightarrow{\varphi}\bigoplus_{i=1}^{m}S{\bf e}_i\xrightarrow{\varepsilon}I_{\bf a,b}\rightarrow 0,
	$$
	where ${\bf f}_i$ has multidegree $x^{a_i}y^{b_{i+1}}$, for $1\le i\le m-1$, ${\bf e}_i$ has multidegree $x^{a_i}y^{b_i}$ and $\varepsilon({\bf e}_i)=(-1)^ix^{a_i}y^{b_i}$, for $1\le i\le m$, and $\varphi$ is represented with respect to the given bases by the following Hilbert-Burch matrix:
	
	\begin{equation}\label{eq:HBmatrixK[x,y]}
	X\ =\ \left(\begin{matrix}
		\phantom{-}y^{b_2-b_1} &    &       &   \\
		-x^{a_1-a_2} & \phantom{-}y^{b_3-b_2}&        &  \\
		& -x^{a_2-a_3} & \ddots    & \\
		&      & \ddots  & \phantom{-}y^{b_{m}-b_{m-1}} \\
		&    & &-x^{a_{m-1}-a_m}
	\end{matrix}\right).
	\end{equation}
\end{Lemma}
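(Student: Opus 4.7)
The plan is to apply the converse part of the Hilbert-Burch Theorem \ref{Thm:HB} to the explicit matrix $X$ in (\ref{eq:HBmatrixK[x,y]}), using $a = x^{a_m}y^{b_1}$ as the regular element. I would need to: (i) compute the maximal minors of $X$; (ii) check that $\grade I_{m-1}(X)\ge 2$; and (iii) identify the generators $(-1)^k a M_k$ produced by Hilbert-Burch with the minimal generators $x^{a_k}y^{b_k}$ of $I_{\bf a,b}$.

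For (i), the key observation is that row $k$ of $X$ is the unique row with nonzero entries in both columns $k-1$ and $k$. Deleting it therefore decouples the remaining $(m-1)\times(m-1)$ matrix into two triangular blocks: an upper $(k-1)\times(k-1)$ block (rows $1,\dots,k-1$ and columns $1,\dots,k-1$) that is lower triangular with diagonal $y^{b_2-b_1},\dots,y^{b_k-b_{k-1}}$, and a lower $(m-k)\times(m-k)$ block (rows $k+1,\dots,m$ and columns $k,\dots,m-1$) that is upper triangular with diagonal $-x^{a_k-a_{k+1}},\dots,-x^{a_{m-1}-a_m}$. Multiplying the diagonals telescopes to
$$
M_k \;=\; (-1)^{m-k}\,x^{a_k-a_m}\,y^{b_k-b_1}, \qquad 1 \le k \le m.
$$

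For (ii), $I_{m-1}(X)$ contains $M_1 = \pm x^{a_1-a_m}$ and $M_m = y^{b_m-b_1}$, which are positive powers of distinct variables (since $a_1 > a_m$ and $b_m > b_1$), so $\grade I_{m-1}(X) \ge 2$. For (iii), a direct substitution gives $(-1)^k a M_k = (-1)^m x^{a_k}y^{b_k}$; up to the global sign $(-1)^m$, which can be absorbed by rescaling the basis $\{{\bf e}_k\}$, this is precisely the $k$-th minimal generator of $I_{\bf a,b}$.

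The converse of Theorem \ref{Thm:HB} then produces $\FF$ as a free resolution of $I_{\bf a,b} = a\cdot I_{m-1}(X)$. Minimality is automatic because every entry of $X$ lies in the graded maximal ideal, since $a_j - a_{j+1} \ge 1$ and $b_{j+1} - b_j \ge 1$. The multidegrees of the basis elements are then forced by homogeneity of $\varepsilon$ and $\varphi$: ${\bf e}_k$ must carry multidegree $x^{a_k}y^{b_k}$ (matching the $k$-th generator), and ${\bf f}_j$ must carry multidegree $x^{a_j}y^{b_{j+1}}$ (the common target multidegree of the two entries in column $j$ of $X$). I expect no deep obstacle: the main technical point is the block-diagonal observation that makes the minor computation tractable, with the rest being careful sign and degree book-keeping.
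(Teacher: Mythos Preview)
Your proposal is correct and follows essentially the same route as the paper: both set $a=x^{a_m}y^{b_1}$, compute the maximal minors of $X$ (the paper states the result $x^{a_m}y^{b_1}\cdot[[m]\setminus\{i\}\,|\,[m-1]]=(-1)^{m-i}x^{a_i}y^{b_i}$ without elaborating on the block structure you describe), verify $\grade I_{m-1}(X)\ge2$ via the pure powers $x^{a_1-a_m}$ and $y^{b_m-b_1}$, and then invoke the converse of Theorem~\ref{Thm:HB}. Your version is simply more explicit about the block-triangular minor computation, the minimality check, and the multidegree bookkeeping, all of which the paper leaves to the reader.
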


\begin{proof}
Let ${\bf c}:c_1>\dots>c_{m-1}>c_m=0$ and ${\bf d}:0=d_1<d_2<\dots<d_m$, where $c_i=a_i-a_m$ and $d_i=b_i-b_1$ for $1\le i\le m$. Since $x^{a_1-a_m},y^{b_m-b_1}\in I_{\bf c,d}$ and these elements form a regular sequence, we have $\depth I_{\bf c,d}=2$.

It is easily seen that $x^{a_{m}}y^{b_1}\cdot[[m]\setminus\{i\}\,|\,[m-1]]=(-1)^{m-i}x^{a_i}y^{b_i}$ for all $1\le i\le m$. Therefore $x^{a_m}y^{b_1}I_{m-2}(X)=I_{\bf a,b}$. Since $I_{m-2}(X)=I_{\bf c,d}$ and $\depth I_{m-2}(X)=2$, Theorem \ref{Thm:HB} implies that $\FF$ is indeed the minimal free resolution of $I_{\bf a,b}$.
\end{proof}

Given integers $k\le h$, we denote by $[k,h]$ the interval $\{k,k+1,\dots,h\}$.

Let $\psi=\varphi\otimes\textup{id}_R$. We denote the bases $\{{\bf f}_i\otimes 1\}_{1\le i\le m-1}$ and $\{{\bf e}_i\otimes 1\}_{1\le i\le m}$ again by $\{{\bf f}_i\}_{1\le i\le m-1}$ and $\{{\bf e}_i\}_{1\le i\le m}$, respectively. A matrix representing $\psi$ with respect to these bases is the $m\times(m-1)$ matrix whose entries are the images in $R=S/I$ of the entries of the matrix $X$ given in (\ref{eq:HBmatrixK[x,y]}). We denote this matrix again by $X$, and we denote the residue class of any element $f\in S$ in $R$ again by $f$.

\begin{proof}[Proof of Theorem \ref{Thm:mainzero1}]
	Let ${\bf w}=\sum_{i=1}^{m-1}w_i{\bf f}_i\in\Ker\psi$. Proceeding by induction on $m\ge2$, we will show that $w_i\in x^{a_m}y^{b_1}I_{m-2}(\psi)$ for all $i$, with the base case $m=2$ being trivial. This implies that $I_1(\alpha)\subseteq x^{a_m}y^{b_1}I_{m-2}(\psi)$. Since the opposite inclusion holds by Lemma \ref{Lem:c_A}, the desired equality follows.
	
	By Lemma \ref{Lem:X-HilBur}, $I_j(\psi)=I_j(X)$ for all $j$, where $X$ is given in (\ref{eq:HBmatrixK[x,y]}).
	
	The resolution $\FF$ of $I$ is multigraded. Therefore, we may assume that each $w_i$ is a monomial of $R$. Let $p=\min\{i:w_i\ne0\}$ and $q=\max\{i:w_i\ne0\}$. Then\smallskip
	\begin{align*}
		\psi({\bf w})\ &=\ (y^{b_{p+1}-b_p}w_p)\,{\bf e}_p\\
		&+\ \sum_{i=p+1}^{q}(y^{b_{i+1}-b_i}w_i-x^{a_{i-1}-a_i}w_{i-1})\,{\bf e}_i\\[0.25em]
		&-\ (x^{a_{q}-a_{q+1}}w_{q})\,{\bf e}_{q+1}\ =\ 0.
	\end{align*}

	All the coefficients in this expression must be zero in $R=S/I$. Since $I$ is a monomial ideal, by \cite[Corollary 1.1.3]{HH2011} we have
	\begin{equation}\label{eq:w1}
		y^{b_{p+1}-b_p}w_p,\,\,x^{a_{q}-a_{q+1}}w_{q}\in I
	\end{equation}
	and
	$$
	\textup{either}\,\,\,\,\,\,\,\,y^{b_{i+1}-b_i}w_i,\,\,x^{a_{i-1}-a_i}w_{i-1}\in I\,\,\,\,\,\,\,\,\textup{or}\,\,\,\,\,\,\,\,y^{b_{i+1}-b_i}w_i=x^{a_{i-1}-a_i}w_{i-1},
	$$
	for $p<i\le q$.
	
	Notice that if $y^{b_{i+1}-b_i}w_i$, $x^{a_{i-1}-a_i}w_{i-1}\in I$ for some $p<i\le q$, then both the elements ${\bf w}'=\sum_{j=p}^{i-1}w_j{\bf f}_j$ and ${\bf w}''=\sum_{j=i}^{q}w_{j}{\bf f}_j$ are again in $\Ker\psi$. Hence, we may furthermore assume that
	\begin{equation}\label{eq:w2}
		y^{b_{i+1}-b_i}w_i=x^{a_{i-1}-a_i}w_{i-1}\,\,\,\,\,\textup{for all}\,\,\,\,p<i\le q.
	\end{equation}
	
	Let $J=(I:y^{b_{p+1}-b_p})$ and $L=(I:x^{a_{q}-a_{q+1}})$. By \cite[Proposition 1.2.2]{HH2011} we have $J=(x^{a_j}y^{\max\{b_{j}+b_p-b_{p+1},0\}}:1\le j\le m)$ and $L=(x^{\max\{a_{j}-a_q+a_{q+1},0\}}y^{b_j}:1\le j\le m)$. By equation (\ref{eq:w1}), $w_p\in J$ and $w_q\in L$. 
	
	Hence, $x^{a_h}y^{\max\{b_{h}+b_p-b_{p+1},0\}}$ divides $w_p$ for some $1\le h\le m$. Using equation (\ref{eq:w2}) for $i=p+1$, we see that $x^{a_h+a_p-a_{p+1}}$ divides $x^{a_p-a_{p+1}}w_p=y^{b_{p+2}-b_{p+1}}w_{p+1}$ and so divides $w_{p+1}$. Using again (\ref{eq:w2}) for $i=p+2$, we see that $x^{a_h+a_p-a_{p+2}}$ divides $w_{p+2}$. Iterating this reasoning, we obtain that
	\begin{equation}\label{eq:wi1}
		x^{a_h+a_{p}-a_{i}}\,\,\,\,\textup{divides}\,\,\,\,w_{i}\,\,\,\,\,\textup{for all}\,\,\,\,p\le i\le q.
	\end{equation}

	Similarly, $x^{\max\{a_{k}-a_q+a_{q+1},0\}}y^{b_k}$ divides $w_q$ for some $1\le k\le m$. Using (\ref{eq:w2}) for $i=q$ we see that $y^{b_{k}+b_{q+1}-b_q}$ divides $y^{b_{q+1}-b_q}w_q=x^{a_{q-1}-a_q}w_{q-1}$. Hence $y^{b_{k}+b_{q+1}-b_q}$ divides $w_{q-1}$. Using again (\ref{eq:w2}) for $i=q-1$, we see that $y^{b_k+b_{q+1}-b_{q-1}}$ divides $w_{q-2}$. Iterating this procedure, we see that
	\begin{equation}\label{eq:wi2}
		y^{b_{k}+b_{q+1}-b_{i+1}}\,\,\,\,\textup{divides}\,\,\,\,w_{i}\,\,\,\,\,\textup{for all}\,\,\,\,p\le i\le q.
	\end{equation}

    We may assume that the following two conditions are satisfied.
	\begin{enumerate}
		\item[(i)] $k<h$, and
		\item[(ii)] $(\min\{p,k\},\max\{q+1,h\})\ne(1,m)$.
	\end{enumerate}
	
	Indeed, suppose (i) does not hold. Then $k\ge h$. Since $p\le i$, then $a_p-a_i\ge0$. Thus, by equation (\ref{eq:wi1}), $x^{a_h}$ divides $w_i$ for $p\le i\le q$. Similarly, since $i\le q$, then $b_{q+1}-b_{i+1}\ge0$, and from equation (\ref{eq:wi2}) we obtain that $y^{b_k}$ divides $w_i$ for $p\le i\le q$. Hence $x^{a_{h}}y^{b_k}$ divides $w_i$ for all $i$. Since $h\le k$, then $a_k\le a_h$ and so $x^{a_k}y^{b_k}$ divides $w_i$ for all $i$. Notice that
	$$
	x^{a_k}y^{b_k}\ =\ x^{a_m}y^{b_1}\cdot x^{a_k-a_m}y^{b_k-b_1}\ =\ \pm\, x^{a_m}y^{b_1}\cdot[[m]\setminus\{k\}\,|\,[m-1]].
	$$
	Hence $w_i\in x^{a_m}y^{b_1}I_{m-1}(X)\subset x^{a_m}y^{b_1}I_{m-2}(X)$ for all $i$.\smallskip
	
	Next, suppose (ii) does not hold. Then, one of the following possibilities occurs: (a) $(p,q)=(1,m-1)$, (b) $(p,h)=(1,m)$, (c) $(k,q)=(1,m-1)$, (d) $(k,h)=(1,m)$.
	
	Assume that (a) holds. Since $a_h\ge a_m$ and $b_k\ge b_1$, the equations (\ref{eq:wi1}) and (\ref{eq:wi2}) imply that $x^{a_1-a_i+a_m}y^{b_{m}-b_{i+1}+b_1}$ divides $w_i$ for all $i$. Since
	$$
	x^{a_m}y^{b_1}\cdot x^{a_1-a_i}y^{b_{m}-b_{i+1}}\ =\ \pm\,x^{a_m}y^{b_1}\cdot[[m]\setminus\{1,m\}\,|\,[m-1]\setminus\{i\}]
	$$
	for all $i$, we conclude that $w_i\in x^{a_m}y^{b_1} I_{m-2}(X)$ for all $i$. The cases (b), (c) and (d) can be treated similarly.
	
	The conditions (i) and (ii) imply that $[\min\{p,k\},\max\{q+1,h\}]$ is a proper subset of $[m]$. Say it is contained in either $[r,m]$ for some $r>1$ or $[1,s]$ for some $s<m$. 
	
	Assume that $[\min\{p,k\},\max\{q+1,h\}]\subseteq[r,m]$. The other case is analogous. Then $p,k\ge r$. Let $I_1=(x^{a_r}y^{b_r},x^{a_{r+1}}y^{b_{r+1}},\dots,x^{a_m}y^{b_m})$ and set $R_1=S/I_1$. Let $\FF_1:0\rightarrow S^{m-r}\xrightarrow{\varphi_1}S^{m-r+1}\rightarrow I_1\rightarrow0$ be the minimal free resolution of $I_1$ described in Lemma \ref{Lem:X-HilBur}, where $S^{m-r}$ has basis ${\bf f}_{1,1},\dots,{\bf f}_{1,m-r}$, $S^{m-r+1}$ has basis ${\bf e}_{1,1},\dots,{\bf e}_{1,m-r+1}$ and $\varphi_1$ is described by the Hilbert-Burch matrix $X_1$ given in (\ref{eq:HBmatrixK[x,y]}). Notice that $X_1$ is the submatrix of $X$ obtained by deleting the first $r-1$ rows and columns. Let $C_1$ be the kernel of $\psi_1=\varphi_1\otimes\textup{id}_{R_1}$. Since $h>k\ge r$, equations (\ref{eq:w1}) and (\ref{eq:w2}) imply that the element ${\bf w}_1=\sum_{j=p}^qw_j{\bf f}_{1,j}\in C_1$. Since $\mu(I_1)=m-r<\mu(I)$, by induction on $m$ we obtain that each $w_i$ is divided by a monomial $v_i=x^{a_{m}}y^{b_{r}}M_i\in x^{a_{m}}y^{b_{r}}I_{m-(r-1)-2}(X_1)$ for all $p\le i\le q$, where $M_i$ is a $(m-(r-1)-2)$-minor of $X_1$. Notice that
	
	\begin{equation}\label{eq:HBmatrixK[x,y]1}
		X\ =\ \left(\begin{array}{ccc|ccccc}
			\phantom{-}y^{b_2-b_1}&&&&&&&\\
			-x^{a_1-a_2} & \ddots&&&&&&\\
			&\ddots&\!\!\!\phantom{-}y^{b_{r}-b_{r-1}}&&&&&\\ \hline
			&&\!\!\!-x^{a_{r-1}-a_r}&&&&&\\ 
			&&&&&&&\\
			&&&&&\,\,{\Huge X_1}&&\\[0.7pt]
			&&&&&&&
		\end{array}\right).
	\end{equation}\smallskip

    \noindent For all $i$, let $N_i$ be the $(m-2)$-minor of $X$ whose rows are the first $r-1$ rows of $X$ together with those of $M_i$, and whose columns are the first $r-1$ columns of $X$ together with those of $M_i$. From equation (\ref{eq:HBmatrixK[x,y]1}), we see that $y^{b_r-b_1}M_i=N_i$ for all $i$. Hence $w_i\in x^{a_m}y^{b_1}I_{m-2}(X)$ for all $i$, as desired.
\end{proof}

\begin{Lemma}\label{Lem:S/I_{a,b}CM}
	The ring $K[x,y]/I_{\bf a,b}$ is Cohen-Macaulay if and only if $m=1$ or $m>1$ and $a_m=b_1=0$.
\end{Lemma}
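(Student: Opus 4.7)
The plan is to prove the two directions separately. The easy direction handles two situations. If $m=1$ then $I=(x^{a_1}y^{b_1})$ is principal, so $R=S/I$ is either zero or a hypersurface in $S$, hence Cohen-Macaulay. If $m\ge 2$ and $a_m=b_1=0$, then the generators $x^{a_1}y^{b_1}=x^{a_1}$ and $x^{a_m}y^{b_m}=y^{b_m}$ both belong to $I$ and form a regular sequence in $S$; therefore $\height I=2$, $\dim R=0$, and $R$ is trivially Cohen-Macaulay.

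For the converse, assume $m\ge 2$ and $(a_m,b_1)\ne (0,0)$. The involution $x\leftrightarrow y$ combined with reversing the index set exchanges the roles of ${\bf a}$ and ${\bf b}$, so we may assume without loss of generality that $a_m\ge 1$. Then every generator of $I$ is divisible by $x$, which forces $I\subseteq (x)$ and $\dim R=1$. It will therefore suffice to exhibit an embedded associated prime of $S/I$, and since the associated primes of a monomial ideal are monomial primes, the only candidate is $\mm=(x,y)$. Once we show $\mm\in\Ass(S/I)$, we obtain $\depth R=0<1=\dim R$ and conclude that $R$ is not Cohen-Macaulay.

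The key and main obstacle is to produce a monomial $u\notin I$ with $\mm\cdot u\subseteq I$, which then forces $\mm=I:u$. I would take
\[
u\ =\ x^{a_{m-1}-1}\,y^{b_m-1},
\]
a well-defined monomial of $S$ because $a_{m-1}>a_m\ge 1$ and $b_m>b_1\ge 0$ (using $m\ge 2$). A direct check relying on the strict monotonicity of ${\bf a}$ and ${\bf b}$ shows that no generator $x^{a_i}y^{b_i}$ of $I$ divides $u$, so $u\notin I$. On the other hand, the generator $x^{a_{m-1}}y^{b_{m-1}}$ divides $xu$ (since $b_{m-1}\le b_m-1$), and the generator $x^{a_m}y^{b_m}$ divides $yu$ (since $a_m\le a_{m-1}-1$), so $\mm u\subseteq I$. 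The shape of this witness is dictated by the two nonzero entries of row $m-1$ in the Hilbert-Burch matrix of Lemma \ref{Lem:X-HilBur}, namely $y^{b_m-b_{m-1}}$ and $-x^{a_{m-1}-a_m}$, which encode precisely the overlap between adjacent generators that produces the $\mm$-torsion.
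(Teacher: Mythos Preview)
Your proof is correct. The easy direction coincides with the paper's argument, and for the converse both you and the paper ultimately rely on the fact that $\mm=(x,y)$ is an associated prime of $S/I$ whenever $m\ge 2$. The paper, however, imports this from an external reference \cite[Proposition~5.1]{FS2023} and then argues via height-unmixedness: Cohen--Macaulayness forces $(x),(y)\notin\Ass(S/I)$, which (again by the cited reference) is equivalent to $a_m=b_1=0$. By contrast, you work entirely inside the problem: after reducing by symmetry to $a_m\ge 1$, you observe directly that $I\subseteq(x)$ gives $\dim R=1$, and you exhibit an explicit socle witness $u=x^{a_{m-1}-1}y^{b_m-1}$ to force $\depth R=0$. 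Your route is more elementary and self-contained, while the paper's is shorter at the cost of invoking outside results; the explicit $u$ you produce also makes visible the connection with the last row of the Hilbert--Burch matrix in Lemma~\ref{Lem:X-HilBur}, which the paper's proof leaves implicit.
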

\begin{proof}
	Set $S=K[x,y]$ and $I=I_{\bf a,b}$. If $m=1$, then $I$ is a principal ideal and thus $S/I$ is Cohen-Macaulay. Let $m>1$. Then \cite[Proposition 5.1(c)]{FS2023} implies that $\mm=(x,y)\in\Ass(I)$. Since $S/I$ is Cohen-Macaulay, $I$ is height-unmixed, and so $(x),(y)\notin\Ass(I)$. Hence \cite[Proposition 5.1(a)-(b)]{FS2023} implies that $a_m=b_1=0$. Conversely, if $a_m=b_1=0$, then $x^{a_1},y^{b_m}\in I$ and $\dim S/I=0$. Thus $\depth S/I=0$ as well, and $S/I$ is Cohen-Macaulay.
\end{proof}
\begin{proof}[Proof of Theorem \ref{Thm:mainzero}]
	Let $I=I_{\bf a,b}$ and $\mu(I)=m$. If $m=1$, then $R$ is Gorenstein and $\tr(\omega_R)=R=I_{-1}(X)/I$. Let $m>1$. Then $a_m=b_1=0$ by Lemma \ref{Lem:S/I_{a,b}CM}, and the assertion follows from Theorem \ref{Thm:mainzero1} and \cite[Corollary 3.2]{HHS1}.
\end{proof}

We have the following classification.
\begin{Proposition}\label{Prop:I_{a,b}-NG}
	Let $I\subset S=K[x,y]$ be a monomial ideal such that $R=S/I$ is Cohen-Macaulay. Then $R$ is nearly Gorenstein if and only if $\mu(I)=1$, $\mu(I)=2$ or $\mu(I)=3$ and $I=(x^{a},x^{b}y^{c},y^{d})$ for some integers $a>b\ge0$, $0\le c<d$ satisfying the conditions: \textup{(i)} $a-b=1$ or $b=1$, \textup{(ii)} $d-c=1$ or $c=1$, and \textup{(iii)} $b+c\ge1$.
\end{Proposition}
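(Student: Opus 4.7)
The plan is to apply Theorem~\ref{Thm:mainzero} and translate the nearly Gorenstein condition into asking when $\mm\subseteq I_{m-2}(X)+I$ in $S$, where $m=\mu(I)$ and $X$ is the explicit bidiagonal Hilbert-Burch matrix of Lemma~\ref{Lem:X-HilBur}. The cases $m=1,2$ are immediate: for $m=1$ the quotient $R$ is a hypersurface, and for $m=2$ Lemma~\ref{Lem:S/I_{a,b}CM} gives $I=(x^{a_1},y^{b_2})$, a complete intersection. In both cases $R$ is Gorenstein, hence nearly Gorenstein. From now on I assume $m\ge 3$. Then $a_m=b_1=0$ by Lemma~\ref{Lem:S/I_{a,b}CM}, so every minimal monomial generator of $I$ has degree at least two, and the condition $\mm\subseteq I_{m-2}(X)+I$ collapses to $x,y\in I_{m-2}(X)$.

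The decisive combinatorial step is that every square submatrix of the bidiagonal matrix $X$ in~\eqref{eq:HBmatrixK[x,y]} has at most one non-zero permutation term in its determinant expansion, so each $(m-2)$-minor of $X$ is either zero or a single monomial $\pm x^{p}y^{q}$. The argument goes via the bipartite graph on the chosen rows and columns, where row $i$ is joined only to columns $i-1$ and $i$: a cycle in this graph would force consecutive transitions between rows to all step $+1$ or all step $-1$, because mixing the two directions would make two successive columns in the cycle coincide; but a monotone cycle cannot close. Thus the graph is a disjoint union of paths and admits at most one perfect matching. In particular $I_{m-2}(X)$ is a monomial ideal, and $x\in I_{m-2}(X)$ (resp.\ $y\in I_{m-2}(X)$) if and only if some $(m-2)$-minor equals $\pm x$ (resp.\ $\pm y$).

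I will next enumerate the pure $x$-power minors. Such a minor has $y$-exponent zero exactly when all of its matched entries lie on the subdiagonal of $X$; this forces the kept rows to be $[m]\setminus\{1,r\}$ and the deleted column to be $r-1$, for some $r\in\{2,\dots,m\}$, and the minor equals $\pm x^{a_{1}-a_{r-1}+a_{r}}$. Since $\sum_{r'=2}^{m}(a_{r'-1}-a_{r'})=a_{1}$ is a sum of $m-1$ positive integers, the exponent $a_{1}-(a_{r-1}-a_{r})=\sum_{r'\ne r}(a_{r'-1}-a_{r'})$ is at least $m-2$. The symmetric argument, swapping the roles of $x$ and $y$, bounds the exponents of pure $y$-power $(m-2)$-minors below by $m-2$ as well. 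Consequently, if $m\ge 4$ neither $x$ nor $y$ lies in $I_{m-2}(X)$, so $R$ is not nearly Gorenstein.

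Finally, for $m=3$ the $(m-2)$-minors are precisely the entries of $X$, so after writing $I=(x^{a},x^{b}y^{c},y^{d})$ we obtain $I_{1}(X)=(x^{a-b},x^{b},y^{c},y^{d-c})=(x^{\min(a-b,b)},y^{\min(c,d-c)})$. The conditions $x\in I_{1}(X)$ and $y\in I_{1}(X)$ then read $\min(a-b,b)=1$ and $\min(c,d-c)=1$, which are exactly (i) and (ii), while (iii) is automatic because $\mu(I)=3$ forces $b,c\ge 1$. The main technical obstacle in carrying out this plan is the single-monomial claim for minors of $X$, since it is precisely what rules out miraculous cancellations that might otherwise produce $x$ or $y$ among $K$-linear combinations of minors.
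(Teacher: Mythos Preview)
Your argument is correct and follows the same overall strategy as the paper: invoke Theorem~\ref{Thm:mainzero}, dispose of $m\le 2$ as complete intersections, and for $m\ge 3$ analyze $I_{m-2}(X)$ for the explicit Hilbert--Burch matrix of Lemma~\ref{Lem:X-HilBur}. Your treatment of the case $m=3$ is essentially identical to the paper's.

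The one place where you work harder than necessary is the case $m\ge 4$. You prove that the bipartite graph of $X$ is a forest, deduce that every minor is a single monomial, and then enumerate the pure $x$-power and pure $y$-power $(m-2)$-minors to bound their degrees from below by $m-2$. The paper bypasses all of this with a one-line degree count: since $X$ comes from a \emph{minimal} free resolution, every entry of $X$ lies in $\mm$, so every $(m-2)$-minor is a sum of products of $m-2$ elements of $\mm$ and hence lies in $\mm^{m-2}$; thus $\tr(\omega_R)\subseteq\mm^{m-2}R\subseteq\mm^2R$ whenever $m\ge 4$. Your structural claim that each minor is a monomial is true and interesting in its own right, but it is not needed here.
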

\begin{proof}
	If $\mu(I)=1$ or $\mu(I)=2$, then $R$ is a complete intersection, and thus a (nearly) Gorenstein ring. If $\mu(I)>3$, then $\tr(\omega_R)\subseteq\mm^2$, where $\mm=(x,y)R$. Finally assume $\mu(I)=3$. Then, by Lemma \ref{Lem:S/I_{a,b}CM}, $I=(x^{a_1},x^{a_2}y^{b_2},y^{b_3})$ where $a_1>a_2\ge0$, $0\le b_2<b_3$ and $a_2+b_2\ge1$. By equation (\ref{eq:HBmatrixK[x,y]}), an Hilbert-Burch matrix of $I$ is
	$$
	\left(\begin{matrix}
		\phantom{-}y^{b_2}&0\\
		-x^{a_1-a_2}&\phantom{-}y^{b_3-b_2}\\
		0&-x^{a_2}
	\end{matrix}\right).
	$$
	Hence $\tr(\omega_R)=(x^{a_1-a_2},x^{a_2},y^{b_3-b_2},y^{b_2})/I$, and the assertion follows.
\end{proof}
\section{Nearly Gorenstein monomial ideals of height two}\label{sec4}

We conclude the paper with the classification of the nearly Gorenstein monomial ideals of height two. For a monomial $u\in S$, we set $\supp(u)=\{x_i:x_i\ \textup{divides}\ u\}$.
\begin{Theorem}\label{Thm:I(G(a,b))-NG}
	Up to a relabeling of the variables, the only nearly Gorenstein monomial ideals in $S=K[x_1,\dots,x_n]$ of height two are:
	\begin{enumerate}
		\item[\textup{(a)}] $(u,v)$ for two monomials $u,v\in S$ with $\supp(u)\cap\supp(v)=\emptyset$.
		\item[\textup{(b)}] $(x_1^{a},x_1^{b}x_2^{c},x_2^{d})$ with $n=2$ and $a>b\ge0$, $0\le c<d$ satisfying the conditions: \textup{(i)} $a-b=1$ or $b=1$, \textup{(ii)} $d-c=1$ or $c=1$, and \textup{(iii)} $b+c\ge1$.
		\item[\textup{(c)}] $(x_1^ax_2^b,x_1x_3,x_2x_3)$ with $a\ge1$, $b\ge0$, $a+b\ge2$ and $n=3$.
		\item[\textup{(d)}] $(x_1x_2^b,x_2^{b+1},x_1x_3)$ with $b\ge1$ and $n=3$.
		\item[\textup{(e)}] $(x_1x_3,x_1x_4,x_2x_4)$ and $n=4$.
	\end{enumerate}
\end{Theorem}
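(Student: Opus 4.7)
The plan is to prove the classification by a direct verification in one direction and a reduction to already proved structural results in the other.

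For sufficiency, (a) is a complete intersection of monomials with disjoint supports and therefore Gorenstein, and (b) is exactly Proposition \ref{Prop:I_{a,b}-NG}. For (c), (d), (e), my approach is to check first that each $I$ is generically a complete intersection by computing the primary decomposition (for example in (e), $I=(x_1,x_4)\cap(x_3,x_4)\cap(x_1,x_2)$) and verifying that each monomial localization $I(P)$ at a minimal prime $P$ is generated by two elements. Then I would write down an explicit $3\times 2$ Hilbert-Burch matrix $X$; for (c) one uses
\[
X=\begin{pmatrix}x_3 & 0\\ -x_1^{a-1}x_2^b & x_2\\ 0 & -x_1\end{pmatrix},
\]
with analogous matrices in (d) and (e), all of whose entries span $\mathfrak m=(x_1,\dots,x_n)$. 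Since $\mu(I)=3$, Theorem \ref{Thm:genericallyci} yields $\tr(\omega_R)=I_1(X)/I=\mathfrak m/I$, so $R$ is nearly Gorenstein.

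For necessity, assume $R=S/I$ is nearly Gorenstein; in particular $R$ is Cohen-Macaulay, so $I$ is a perfect monomial ideal of height two. If $\mu(I)=2$, the height-two condition forces the two generators to have coprime supports, giving case (a). For $\mu(I)\ge 3$, the key input is that the canonical trace commutes with localization (since $\Hom$ does for finitely generated modules over Noetherian rings), hence $R_P$ is nearly Gorenstein for every minimal prime $P=(x_i,x_j)$ of $I$. Since $R_P$ is then a zero-dimensional nearly Gorenstein localization of $K[x_i,x_j]/I(P)$, Proposition \ref{Prop:I_{a,b}-NG} pins down $\mu(I(P))\le 3$, and in the equality case $I(P)$ has the precise three-generator shape of Proposition \ref{Prop:I_{a,b}-NG}(b).

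The argument now dichotomises on the number of variables appearing in $I$. If $I\subseteq K[x_i,x_j]$, then Proposition \ref{Prop:I_{a,b}-NG} applied to $I$ directly gives case (b). Otherwise $I$ involves at least three variables, and I would show that $I$ must then be generically a complete intersection: failure at some $P=(x_i,x_j)$ produces, by tracking the liftings of the three generators of $I(P)$ to generators of $I$ through the Hilbert-Burch matrix and Lemma \ref{Lem:c_A}, a variable $x_k\notin\{x_i,x_j\}$ that cannot lie in $I_{\mu(I)-2}(X)+I$, contradicting near-Gorensteinness. Once $I$ is generically a complete intersection, Theorem \ref{Thm:G(a,b)} writes $I=I_{G,\mathbf a,\mathbf b}$ with $G(\mathbf a,\mathbf b)$ cochordal, and Theorem \ref{Thm:genericallyci} gives $\tr(\omega_R)=I_{\mu(I)-2}(X)/I$. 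A degree count on the $(\mu(I)-2)$-minors of $X$ forces $\mu(I)=3$, and the admissible graphs $G$ (with at most three edges and cochordality of $G(\mathbf a,\mathbf b)$) together with the constraint $I_1(X)+I=\mathfrak m$ can then be explicitly enumerated, matching precisely the three families (c), (d), (e) according as the underlying graph $G$ is a path $P_3$, a triangle, or a path $P_4$ with compatible multiplicities.

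The most delicate step is the reduction to the generically Gorenstein case when $n\ge 3$: local non-Gorensteinness at one minimal prime must be shown to globally obstruct nearly Gorensteinness, which forces an argument going beyond the one-sided inclusion of Lemma \ref{Lem:c_A}. Once this reduction is secured, the remainder is a bookkeeping exercise with the cochordal graph $G(\mathbf a,\mathbf b)$ and the explicit form of $I_1(X)$ for $\mu(I)=3$, which is mechanical.
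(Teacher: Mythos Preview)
Your proposal has a genuine gap at what you yourself call ``the most delicate step''. You argue that if $\mu(I(P))=3$ at some minimal prime $P$, then some $x_k$ fails to lie in $I_{\mu(I)-2}(X)+I$; but Lemma~\ref{Lem:c_A} gives only the inclusion $I_{\mu(I)-2}(X)R\subseteq\tr(\omega_R)$, so this does not exclude $x_k$ from $\tr(\omega_R)$ and no contradiction with near-Gorensteinness follows. The fix is far simpler than what you sketch, and you already have the ingredients. You note that trace commutes with localization and conclude that $R_P$ is nearly Gorenstein at each minimal prime $P$; but when $P\ne\mm$ (which is automatic once $n\ge 3$), some $x_k\notin P$ becomes a unit in $R_P$, so $\mm R_P=R_P\subseteq\tr(\omega_{R_P})$ and $R_P$ is in fact \emph{Gorenstein}, not merely nearly Gorenstein. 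This is exactly the statement that nearly Gorenstein rings are Gorenstein on the punctured spectrum, which the paper invokes directly via \cite[Proposition~2.3(a)]{HHS1}; it gives generically Gorenstein at once, and Theorem~\ref{Thm:genericallyci} then yields $\tr(\omega_R)=I_{\mu(I)-2}(X)R\subseteq\mm^{\mu(I)-2}$, forcing $\mu(I)\le 3$.

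For the final enumeration with $\mu(I)=3$ and $n\ge 3$, your route through Theorem~\ref{Thm:G(a,b)} and the cochordal graph $G(\mathbf a,\mathbf b)$ is workable but less direct than the paper's. The paper observes that a Hilbert--Burch matrix of $I$ arises as a $3\times 2$ submatrix of the $3\times 3$ Taylor matrix (via \cite[Lemma~9.2.4]{HH2011}), hence has at most four nonzero monomial entries; since $I_1(X)R=\mm$, this forces $n\le 4$ immediately, and the possible shapes of $X$ for $n=3,4$ are then listed by hand. Your graph approach would still need an argument bounding the number of edges of $G$, and your pairing of (c), (d), (e) with $P_3$, triangle, $P_4$ is slightly off: case~(c) with $b\ge 1$ has three minimal primes forming a triangle, while case~(d) corresponds to a $P_3$.
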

\begin{proof}
	Let $I\subset S$ be a nearly Gorenstein monomial ideal of height two and set $R=S/I$. By \cite[Proposition 2.3(a)]{HHS1}, $I$ is Gorenstein on the punctured spectrum, and so is generically Gorenstein. Theorem \ref{Thm:genericallyci} implies that $\tr(\omega_{R})=I_{m-2}(X)R$ where $m=\mu(I)$ and $X$ is an Hilbert-Burch matrix of $I$. Since each non-zero entry of $X$ has positive degree, $\tr(\omega_{R})\subseteq\mm^{m-2}$, with $\mm=(x_1,\dots,x_n)R$. Thus, if $I$ is nearly Gorenstein, then $m\le 3$. We cannot have $m=1$, otherwise $I$ has height one.
	
	If $m=2$, then $I=(u,v)$. If $x_i\in\supp(u)\cap\supp(v)$, then $\height(I)=1$ against the assumption. Thus $\supp(u)\cap\supp(v)=\emptyset$ and case (a) follows. Conversely, if (a) holds, then $I=(u,v)$ is a complete intersection, and thus is (nearly) Gorenstein.
	
	Assume now $m=3$, then $\tr(\omega_{R})\subseteq\mm$ and so we must have $\tr(\omega_R)=\mm$. Since $m=3$, then $n\ge2$. If $n=2$, then Proposition \ref{Prop:I_{a,b}-NG} implies that case (b) holds.
	
	Hence, we may assume $n\ge3$. Let $G(I)=\{u_1,u_2,u_3\}$. Then, we have the following matrix arising from the so-called Taylor resolution (see \cite[Section 7.1]{HH2011}):
	$$
	T=\left(\begin{matrix}
		\phantom{-}\frac{\lcm(u_1,u_2)}{u_1}&\phantom{-}\frac{\lcm(u_1,u_3)}{u_1}&0\\
		-\frac{\lcm(u_1,u_2)}{u_2}&0&\phantom{-}\frac{\lcm(u_2,u_3)}{u_2}\\
		0&-\frac{\lcm(u_1,u_3)}{u_3}&-\frac{\lcm(u_2,u_3)}{u_3}
	\end{matrix}\right).
	$$
	
	It follows from \cite[Lemma 9.2.4]{HH2011} that an Hilbert-Burch matrix of $I$ is an appropriate $3\times 2$ submatrix of $T$. Let $X$ be such an Hilbert-Burch matrix. Then $X$ has only four non-zero entries. Since we must have $\tr(\omega_R)=(x_1,\dots,x_n)R$, but we also have $\tr(\omega_R)=I_1(X)R$, we deduce that $n$ is at most four.
	
	Let $n=3$. Up to relabeling and exchange of rows, we have
	\begin{equation}\label{eq:particularX1}
		X=\left(\begin{matrix}
			\phantom{-}x_1&0\\
			-x_2&\phantom{-}w\\
			0&-x_3
		\end{matrix}\right)\,\,\,\,\,\,\textup{or}\,\,\,\,\,\,X=\left(\begin{matrix}
		\phantom{-}x_1&0\\
		-x_2&\phantom{-}x_3\\
		0&-w
	\end{matrix}\right),
	\end{equation}
	where $w$ is a suitable monomial. 
	
	In the first case $I=I_2(X)=(x_1w,x_1x_3,x_2x_3)$. Note that $x_3$ cannot divide $w$, otherwise we would have $m<3$. Thus $w=x_1^{a}x_2^b$ for some $a,b\ge0$ with $a+b\ge1$. Case (c) follows. Conversely, if (c) holds and $I=(x_1^ax_2^b,x_1x_3,x_2x_3)$ for some $a\ge1$, $b\ge0$ and $n=3$, then $I$ is generically Gorenstein, putting $w=x_1^{a-1}x_2^b$, the first matrix $X$ in (\ref{eq:particularX1}) is an Hilbert-Burch matrix of $I$ and so $I$ is nearly Gorenstein.
	
	In the second case $I=I_2(X)=(x_1x_3,x_1w,x_2w)$. Notice that $x_1$ and $x_3$ cannot divide $w$, otherwise $I$ would have height one. Thus $w=x_2^b$ with $b\ge1$ and case (d) follows. Conversely, if (d) holds and $I=(x_1x_2^b,x_2^{b+1},x_1x_3)$ with $b\ge1$ and $n=3$, then $I$ is generically Gorenstein, putting $w=x_2^b$ the second matrix $X$ given in (\ref{eq:particularX1}) is an Hilbert-Burch matrix of $I$ with $I_1(X)=\mm$ and $I$ is nearly Gorenstein.
	
	Finally, assume $n=4$. Then, up to relabeling and exchange of rows,
	\begin{equation}\label{eq:particularX2}
		X=\left(\begin{matrix}
			\phantom{-}x_1&0\\
			-x_2&\phantom{-}x_3\\
			0&-x_4
		\end{matrix}\right).
	\end{equation}
	Thus $I=I_2(X)=(x_1x_3,x_1x_4,x_2x_4)$ and case (e) follows. Conversely, if (e) holds, then the $I=(x_1x_3,x_1x_4,x_2x_4)$ is generically Gorenstein, the matrix $X$ given in (\ref{eq:particularX2}) is an Hilbert-Burch matrix of $I$ and thus $I$ is nearly Gorenstein.
\end{proof}

{}

\end{document}